	\newtheorem{theorem}{Theorem}
	\newtheorem{lemma}{Lemma}
	\newtheorem{corollary}{Corollary}
	\newtheorem{definition}{Definition}
	\newtheorem{proposition}{Proposition}
	\newtheorem{observation}{Observation}
\newcommand{\conf}[1]{}
\newcommand{\full}[1]{}
\newcommand{\journal}[1]{#1}
\newcommand{\conv}{\operatorname{conv}}
\newcommand{\R}{\mathbb R}
\newcommand{\N}{\mathbb N}
\newcommand{\Z}{\mathbb Z}
\newcommand{\bfe}{\mathbf{e}}
\newcommand{\bzero}{\mathbf{0}}
\newcommand{\transp}{\top}
\newcommand{\Halmos}{\qed}
\newcommand{\NP}{\texttt{NP}}
\newcommand{\coNP}{\texttt{co}-\texttt{NP}}
\newcommand{\XP}{\texttt{XP}}
\newcommand{\DP}{\texttt{D$^\text{p}$}}
\newcommand{\bZ}{\mathbb{Z}}
\newcommand{\bR}{\mathbb{R}}
\spnewtheorem{myclaim}{Claim}{\bfseries}{\rmfamily}
\begin{document}
	\mainmatter              

	\title{The Complexity of Recognizing Facets for the Knapsack Polytope}
	\author{Rui Chen$^1$, Haoran Zhu$^2$}
	
	\authorrunning{Chen, Zhu}
	
	\institute{$^1$School of Data Science, The Chinese University of Hong Kong, Shenzhen (\email{rchen@cuhk.edu.cn})\\
		$^2$Microsoft (\email{haoranzhu@microsoft.com})
	}
	
	\maketitle
	
	\begin{abstract}
		The complexity class {\DP}  is the class of all languages that are the intersection of a language in {\NP} and a language in \coNP. It was conjectured that recognizing a facet for the knapsack polytope is \DP-complete. We provide a positive answer to this conjecture. Moreover, despite the \DP-hardness of the recognition problem, we give a polynomial time algorithm for deciding if an inequality with a fixed number of distinct coefficients defines a facet of a knapsack polytope.		
	\end{abstract}

	\section{Introduction}\label{sec:intro}
	\noindent The polyhedral approach has been crucial for the success of solving combinatorial optimization (CO) problems of practical sizes over the last few decades. Many important CO problems can be reformulated as a linear optimization problem over certain discrete sets of vectors. The study of the convex hulls of such discrete sets is a central topic in polyhedral combinatorics as it leads to linear programming reformulations of these CO problems. The convex hulls of such discrete sets associated with some of the well-studied CO problems such as the \emph{travelling salesman problem} (TSP), the \emph{clique problem} and the \emph{knapsack problem} are called the {TSP polytope}, the {clique polytope} and the {knapsack polytope} (KP), respectively. 
	The characterization of the facets (i.e., faces corresponding to irredundant valid linear inequalities) of these polytopes is of particular interest in {polyhedral combinatorics} \cite{schrijver2003combinatorial}.  
	However, a complete list of the irredundant linear inequalities describing the combinatorial polytope is generally hard to obtain. Karp and Papadimitriou \cite{karp1982linear} show that, unless \NP=\coNP, there does not exist a computationally tractable description by linear inequalities of the polyhedron associated with any \NP-complete CO problem.
	
	Although it is hard to obtain all facet-defining inequalities in general, from the mixed-integer programming (MIP) perspective, obtaining strong valid inequalities can be critical for reducing the number of nodes required in the branch-and-cut procedure. There has been a very large body of literature aimed at generating valid inequalities for certain combinatorial polytopes \cite{chopra1989spanning,grotschel1979symmetric,marchand2002cutting}. In particular, there has been significant interest in studying valid inequalities for the (0-1) knapsack polytope \cite{balas1975facets,balas1978facets,weismantel19970,del2023multi}, given that knapsack constraints often show up as substructures in general binary integer programs. Moreover, Crowder et al. \cite{crowder1983solving} and Boyd \cite{boyd1994fenchel} have empirically shown that the feasible region of many binary integer programs can be well-approximated by inequalities valid for individual knapsack polytopes. In addition to applications in general MIP solutions, the characterization of knapsack polytopes have applications in other combinatorial problems, e.g., in developing efficient algorithms for the bin packing problem \cite{goemans2020polynomiality,jansen2020structure}.
	
	Due to physical and computational constraints, we may only generate a relatively small set of valid inequalities when solving CO problems in practice, a natural question to ask next is regarding the strength of these inequalities: Given an inequality and an instance of a CO problem, is this inequality facet-defining for the associated combinatorial polytope? We denote this decision problem by \texttt{CO FACETS} where \texttt{CO} is the specific CO problem. Karp, Papadimitriou and Yannakakis are the first ones taking a theoretical perspective to this problem, and studying its computational complexity. For the decision problem \texttt{TSP FACETS}, some results concerning the complexity of this problem are obtained by Karp and Papadimitriou \cite{karp1982linear}. They show that if \texttt{TSP FACETS} is in \NP, then \NP=\coNP. To provide a more general complexity class for the decision problem of recognizing whether an inequality is a facet of a particular polytope, in another seminal paper by Papadimitriou and Yannakakis \cite{papadimitriou1982complexity}, they introduce a new complexity class, \DP, defined as the class of all languages that are the intersection of a language in {\NP} and a language in \coNP. In other words, a decision problem $A$ is in {\DP} if and only if there exist a decision problem $B$ in {\NP} and a decision problem $C$ in {\coNP} such that an $A$ instance has a ``yes" answer if and only if both a corresponding $B$ instance and a corresponding $C$ instance have ``yes" answers.
	An important observation made by \cite{papadimitriou1982complexity} is that, unless {\NP}$=${\coNP}, {\DP} is above {\NP}$\cup${\coNP}, i.e., {\NP}$\cup${\coNP} is a proper sub-class of {\DP}. The complexity class {\DP} is a natural niche for many important classes of problems. For instance, as the motivation problem in \cite{papadimitriou1982complexity}, \texttt{TSP FACETS} is in \DP. This is because, a facet-defining inequality of a polytope $P$ is essentially a valid inequality that holds at equality at $\dim(P)$ affinely independent points in $P$. So determining whether an inequality is facet-defining for a TSP polytope is equivalent to deciding: (i) if this inequality is valid to the polytope ({\coNP} problem), and (ii) if there exist $\dim(P)$ affinely independent points in $P$ that satisfy the inequality at equality ({\NP} problem). Papadimitriou and Yannakakis \cite{papadimitriou1982complexity} show that some other interesting combinatorial problems, including \emph{critical problems}, \emph{exact problems} and \emph{unique solution problems}, are naturally in {\DP}. Some problems were later shown to be complete for {\DP}. In particular, Cai and Meyer \cite{MR882532} show that the \emph{graph minimal 3-colorability problem} is \DP-complete. Rothe \cite{MR1979853} show that the \emph{exact-4-colorability problem} is \DP-complete. Recently, Bulut and Ralphs \cite{bulut2021complexity} show that the optimal value verification problem for inverse MIP is {\DP}-complete. Regarding \texttt{CO FACETS}, in the original paper by Papadimitriou and Yannakakis \cite{papadimitriou1982complexity}, they show that \texttt{CLIQUE FACETS} is \DP-complete, and conjecture the same hardness for \texttt{TSP FACETS}. This conjecture was later proved by Papadimitriou and Wolfe \cite{papadimitriou1985complexity}. When studying the complexity of lifted inequalities for the \emph{knapsack problem}, along with some other interesting results, Hartvigsen and Zemel \cite{hartvigsen1992complexity} show that recognizing valid inequalities for the knapsack polytope is \coNP-complete, and conjectured that \texttt{KNAPSACK FACETS} is \DP-complete. The first main contribution of this paper is that we give a positive answer to this conjecture.
	
	Despite the \DP-completeness of the facet-recognition problem associated with the KP, one can still recognize specific facets of the KP in polynomial time. It has been shown that for an inequality with only binary coefficients on the left-hand side, whether this inequality is facet-defining for a knapsack polytope can be determined in polynomial time \cite{balas1975facets,hammer1975facet,wolsey1975faces}. In this paper, we further extend this result to a more general scenario: as long as the inequality has a fixed number of distinct positive coefficients, the corresponding \texttt{KNAPSACK FACETS} can be solved in polynomial time. In fact, we will show that, \texttt{KNAPSACK FACETS} can be solved in time $n^{K+O(1)}$, where $K$ is the number of distinct positive coefficients of the inequality and $n$ is the dimension. 
	
	The remainder of the paper is organized as follows. In Section~\ref{sec: kp_support}, along with a few auxiliary \DP-complete results, we establish that the recognition problem of a supporting hyperplane for the knapsack polytope is \DP-complete.
	In Section~\ref{sec: kp_facet}, we prove the main result of this paper, which is that recognizing facets for knapsack polytope is also \DP-complete. In Section~\ref{sec: xp}, we give a polynomial time algorithm for \texttt{KNAPSACK FACETS} on inequalities with a fixed number of distinct coefficients.
	\full{Lastly, in Section~\ref{sec: kp_membership}, we show that the problem of recognizing if a given point is in a given knapsack polytope is \NP-complete.}

	\mbox{}\\
	\noindent {\bf Notations. }
	For an integer $n$ we set $[n] := \{1, 2, \ldots, n\}$. We let $\N$ denote the set of positive integers, i.e., $\N=\{1,2,\ldots\}$. For a vector $w \in \R^n$ and $S \subseteq [n]$, we set $w(S): = \sum_{i \in S} w_i$ and $w_S$ being the subvector of $w$ with components $(w_i)_{i\in S}$. For a sequence $f\in \R^\N$ and $S\subseteq \N$ with $|S|<\infty$, we set $f(S): = \sum_{i \in S} f_i$. For $i\in[n]$, we let $\bfe_i$ denote the $i$-th unit vector in $\bR^n$.
	
	\section{Critical Subset Sum and Knapsack Supporting Hyperplane Problems}
	\label{sec: kp_support}
	
	Papadimitriou and Yannakakis \cite{papadimitriou1982complexity} show that the \emph{TSP supporting hyperplane problem}, which is the problem of deciding if a given inequality with integer coefficients provides a supporting hyperplane to the given TSP polytope, is \DP-complete. In this section, we extend the same completeness result to the following {\emph{knapsack supporting hyperplane problem}: Given an inequality $\alpha^\transp x \leq \beta$ with $\alpha \in \Z^n$ and a knapsack set $\left\{x \in \{0,1\}^n: a^\transp x \leq b \right\}$, is it true that this inequality is valid for the associated KP and the corresponding hyperplane has a nonempty intersection with the KP? Throughout the paper, we call the set $\left\{x \in \{0,1\}^n: a^\transp x \leq b \right\}$ with $(a,b)\in \N^{n+1}$ a knapsack set.
		
		Before proceeding to the proof of the main result in this section,  we first introduce a problem in {\DP}.\\
		\emph{Exact vertex cover} (\texttt{EVC}): Given graph $G=(V,E)$ and a positive integer $k$, is it true that the minimum vertex cover of $G$ has size exactly $k$, i.e., there exists $V'$ of size $k$ but no $V'$ of size $k-1$ such that $V'\cap e\neq \emptyset$ for all $e\in E$? We use $(V,E,k)$ to denote one particular instance of \texttt{EVC}.
		
		It has been shown that a class of exact problems, including \texttt{EVC}, are \DP-complete. 
		
		\begin{theorem}[\cite{papadimitriou1982complexity}]
			\label{theo: evc_dpc}
			\texttt{EVC} is \DP-complete.
		\end{theorem}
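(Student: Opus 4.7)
The plan is as follows. For membership in \DP, observe that the predicate ``the minimum vertex cover of $G$ has size exactly $k$'' is the conjunction of (i) ``there exists a vertex cover of $G$ of size $k$,'' which lies in \NP \ (a cover of size $k$ serves as a polynomial-size certificate), and (ii) ``there is no vertex cover of $G$ of size $k-1$,'' which lies in \coNP. Hence \emph{EVC} is in \DP.

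For \DP-hardness, I would reduce from the canonical \DP-complete problem \emph{SAT-UNSAT}: given two $3$-CNF formulas $(\varphi_1,\varphi_2)$, decide whether $\varphi_1$ is satisfiable and $\varphi_2$ is unsatisfiable. The starting point is the standard Karp reduction from $3$-SAT to vertex cover, which, given a $3$-CNF $\varphi$ with $n$ variables and $m$ clauses, outputs a graph $H_\varphi$ on $2n+3m$ vertices whose minimum vertex cover has size at least $n+2m$, with equality iff $\varphi$ is satisfiable. The key technical step is to upgrade this into a reduction with a gap of \emph{exactly one}: I would modify the construction to produce a graph $\widetilde{H}_\varphi$ whose minimum vertex cover has size $t := n+2m$ when $\varphi$ is satisfiable and $t+1$ when $\varphi$ is unsatisfiable. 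This can be arranged by augmenting $H_\varphi$ with a ``universal slack'' gadget that lets a single additional vertex simultaneously absorb an arbitrary number of clause violations.

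With such a tight reduction available, the rest of the argument is routine. Given $(\varphi_1,\varphi_2)$, I would form $\widetilde{H}_1$ and $\widetilde{H}_2$ with thresholds $t_1$ and $t_2$, let $G := \widetilde{H}_1 \sqcup \widetilde{H}_2$ be the disjoint union, and set $k := t_1 + (t_2+1)$. Because the minimum vertex cover of a disjoint union is the sum of the minimum vertex covers of its components, we get $\mathrm{VC}(G) = k$ precisely when $\mathrm{VC}(\widetilde{H}_1) = t_1$ and $\mathrm{VC}(\widetilde{H}_2) = t_2 + 1$, which by the tight gap property holds iff $\varphi_1$ is SAT and $\varphi_2$ is UNSAT, as required.

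The main obstacle is engineering the slack gadget so that the gap is \emph{exactly} one rather than only at least one: naively added auxiliary vertices do not prevent the minimum vertex cover from growing by more than one unit when many clauses are simultaneously unsatisfied, so one has to arrange the gadget so that all clause violations can be ``paid for'' at once by a single extra vertex. Once this gadget is in place, the \DP-hardness reduction above goes through cleanly.
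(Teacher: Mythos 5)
Your membership argument is fine, and note that the paper itself does not prove this statement (it is quoted from Papadimitriou and Yannakakis), so the only question is whether your hardness reduction stands on its own. It does not: the final combination step is genuinely broken. With your ``tight gap'' graphs you have $\mathrm{VC}(\widetilde{H}_1)\in\{t_1,t_1+1\}$ and $\mathrm{VC}(\widetilde{H}_2)\in\{t_2,t_2+1\}$, and for the disjoint union the cover sizes add. Hence $\mathrm{VC}(G)=k=t_1+t_2+1$ holds not only when $(\varphi_1,\varphi_2)$ is (SAT, UNSAT) but also when it is (UNSAT, SAT), since $(t_1+1)+t_2=k$ as well. So a no-instance of SAT--UNSAT is mapped to a yes-instance of \emph{EVC}, and the many-one reduction fails. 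The standard repair is to break the symmetry between the two coordinates, e.g.\ take two disjoint copies of $\widetilde{H}_1$ (so its contribution is $2t_1$ or $2t_1+2$) and set $k=2t_1+t_2+1$; then the four SAT/UNSAT combinations yield four distinct values $2t_1+t_2,\,2t_1+t_2+1,\,2t_1+t_2+2,\,2t_1+t_2+3$, and only (SAT, UNSAT) hits $k$. Any equivalent asymmetric weighting works, but some such device is mandatory.

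A secondary, smaller gap: the ``universal slack gadget'' that is supposed to make the minimum cover exactly $t+1$ (rather than $t+\ell$ for $\ell$ violated clauses) is asserted, not constructed, and as described -- a single extra vertex absorbing arbitrarily many clause violations -- it is not clear it can exist at the graph level, since adding a high-degree vertex also perturbs the satisfiable case. The clean way to get the unit gap is to modify the \emph{formula} before applying the standard reduction: add a fresh literal $y$ to every clause and append the unit clause $\neg y$. Then every assignment with $y=\mathrm{true}$ violates exactly one clause, all clauses are satisfiable iff the original formula is, and the standard 3-SAT-to-vertex-cover construction (whose optimum exceeds $n+2m$ by exactly the minimum number of violated clauses) inherits the gap of exactly one. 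With that formula-level trick plus the asymmetric combination above, your outline can be completed.
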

		
		In this section, we will first define an auxiliary problem, which we call the \emph{critical subset sum problem} (\texttt{CSS}), and show that \texttt{EVC} is reducible to \texttt{CSS} (Theorem~\ref{theo: css_dpc}), and then show that \texttt{CSS} is reducible to the \emph{knapsack supporting hyperplane problem} (Theorem~\ref{theo: supporting_kp_dpc}), thus establishing the \DP-completeness of the \emph{knapsack supporting hyperplane problem}. Here we remark that all reductions we mention in this paper refer to the polynomial time many-one reduction, or Karp reduction \cite{Karp1972}.
		
		Now we define \texttt{CSS}, which is a slight variant of the \emph{subset sum problem}.\\
		\emph{Critical subset sum} (\texttt{CSS}): Given $w \in \Z^n_+$ and a target sum $t$, is it true that there exists a subset $S \subseteq [n]$ such that $w(S) = t-1$, but no subset $T \subseteq [n]$ such that $w(T) = t$? We use $(w, t)$ to denote one particular instance of \texttt{CSS}.
		
		Using the standard reduction from \emph{vertex cover} to \emph{subset sum}, we can show the following result, which will play a crucial rule in the next section.
		\begin{theorem}
			\label{theo: css_dpc}
			\texttt{CSS} is \DP-complete.
		\end{theorem}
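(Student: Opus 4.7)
My plan is to first show that $\mathrm{CSS}\in\DP$ and then reduce EVC to CSS. Membership in \DP{} is immediate: the predicate ``$\exists S\subseteq[n]:w(S)=t-1$'' is a subset sum query (in \NP{}), while ``$\nexists T\subseteq[n]:w(T)=t$'' is the complement of a subset sum query (in \coNP{}), so CSS is the intersection of an \NP{} language and a \coNP{} language.

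For the hardness direction, I reduce from EVC. Given an instance $(V,E,k)$ of EVC with $|V|=n$ and $|E|=m$, I will use a variant of the textbook reduction from vertex cover to subset sum, tuned so that the target corresponding to ``VC of size $\ell$'' changes by exactly $1$ when $\ell$ changes by $1$. Taking a base $b>\max\{n,3\}$ (\eg $b=n+1$ for $n\geq 3$), I define $w_i:=1+\sum_{j:\, e_j\ni i}b^j$ for each vertex $i\in V$ and $c_j:=b^j$ for each edge $e_j\in E$, and set $\tau_\ell:=\ell+2\sum_{j=1}^m b^j$. A digit-by-digit analysis in base $b$ (no carries arise since $|S\cap V|\leq n<b$ and, for each $j\geq 1$, the positional digit $d_j+f_j\leq 3<b$, where $d_j,f_j$ count the vertex and edge items associated to $e_j$ that are in $S$) will show that for $0\leq\ell\leq n$, there exists $S$ with $w(S)=\tau_\ell$ if and only if $S\cap V$ is a vertex cover of $G$ of size exactly $\ell$; by extending smaller covers, this is equivalent to $G$ having a vertex cover of size at most $\ell$. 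The key feature is $\tau_k-\tau_{k-1}=1$, matching the off-by-one gap in the CSS definition.

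The remaining twist is that CSS asks for existence at $t-1$ and non-existence at $t$, so the \emph{smaller} of the two targets is the one that must be hit; but in our reduction the smaller target $\tau_{k-1}$ corresponds to the more restrictive condition (no VC of size $k-1$), so the orientations are reversed. I fix this by passing to set complements: letting $W:=\sum_i w_i+\sum_j c_j$, a subset $S$ sums to $s$ if and only if its complement $[n]\setminus S$ sums to $W-s$, so the achievability of $s$ and of $W-s$ coincide. I then set $t:=W-\tau_{k-1}$, and since $\tau_k=\tau_{k-1}+1$ we have $t-1=W-\tau_k$. Consequently, $(w,t)$ is a YES instance of CSS if and only if ($\exists$ VC of size $\leq k$) and ($\nexists$ VC of size $\leq k-1$), which is precisely the EVC condition that the minimum vertex cover of $G$ has size exactly $k$.

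The step I expect to be the most delicate is the digit-level verification that $w(S)=\tau_\ell$ forces $S\cap V$ to be a vertex cover, in particular checking that both allowed digit patterns $(d_j,f_j)\in\{(2,0),(1,1)\}$ imply that $e_j$ is covered by $S\cap V$. All weights and the target have polynomial bit length (since $b^m=(n+1)^m$ does), so the reduction runs in polynomial time; combined with \DP{} membership, this establishes the \DP-completeness of CSS.
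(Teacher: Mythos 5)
Your proof is correct and follows essentially the same route as the paper: a reduction from EVC using the textbook vertex-cover-to-subset-sum gadget, with complementation used to align the ``exists at the smaller target, not at the larger'' orientation of CSS. In fact, with $b=n+1$ your weights and final target $t=W-\tau_{k-1}=n-k+1+\sum_{j=1}^m(n+1)^j$ coincide exactly with the instance constructed in the paper, which just carries out the complement argument directly instead of as a separate step (and, unlike you, leaves \DP{} membership implicit).
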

		\textbf{Proof.}
		Note that deciding whether there exists $S \subseteq [n]$ such that $w(S) = t-1$ is in \NP, and deciding whether there does not exist $T \subseteq [n]$ such that $w(T) = t$ is in \coNP. Therefore, \texttt{CSS} is in \DP.
		
		By Theorem \ref{theo: evc_dpc}, to show \DP-completeness of \texttt{CSS}, it suffices to show that \texttt{EVC} is reducible to \texttt{CSS}. Given instance $(V,E,k)$ of the exact vertex cover problem, define the following \texttt{CSS} instance. Assume $V=\{v_1,\ldots,v_n\}$ and $E=\{e_1,\ldots,e_m\}$.  For $i=1,\ldots,n$, define $w_i:=1+\sum_{j=1}^m(n+1)^j\mathbbm{1}(v_i\in e_j)$. For $j=1,\ldots,m$, define $w_{n+j}:=(n+1)^j$. Define $t:=n-k+1+\sum_{j=1}^m(n+1)^j$. Then the \texttt{CSS} instance $(w,t)$ has polynomial encoding size with respect to the input size of the \texttt{EVC} instance $(V,E,k)$.
		
		Let $\tilde{I}:=\{i_1,\ldots,i_{p}\}\subseteq [n]$ and $\tilde{V}:=\{v_{i_1},\ldots,v_{i_{p}}\}\subseteq V$. Note that $\tilde{V}$ is a vertex cover of $G$ if and only if the ($q+1$)-th digit of $w(\tilde{I})$ (in base $n+1$) is at least $1$ for $q=1,\ldots,n$. Define $\bar{I}:=[n]\setminus\tilde{I}$ and $\bar{V}=V\setminus\tilde{V}$. Then $\tilde{V}$ is a vertex cover of $G$ if and only if the ($q+1$)-th digit of $w(\bar{I})$ is at most $1$ for $q=1,\ldots,n$. Also note that the first digit of $w(\bar{I})$ is $n-p$. Define $\bar{J}:=\{n+j:j\in[m],~e_j\cap \bar{V}=\emptyset\}$. Then $\tilde{V}$ being a vertex cover of $G$ implies $w(\bar{I}\cup\bar{J})=n-p+\sum_{j=1}^m(n+1)^j$. On the other hand, $w(S)=n-p+\sum_{j=1}^m(n+1)^j$ implies $|[n]\setminus S|=p$ and $\{v_i:i\in [n]\setminus S\}$ being a vertex cover of $G$. Then by definitions of the \texttt{EVC} instance $(V,E,k)$ and the \texttt{CSS} instance $(w,t)$, we have that the \texttt{EVC} instance has a ``yes" answer if and only if the \texttt{CSS} instance has a ``yes" answer.
		\hfill \Halmos
		\journal{
			The above theorem has an immediate corollary on the \emph{exact knapsack problem} (\texttt{EK}) which asks: Given a $n$-dimensional vector $c$, a knapsack constraint $a^\transp x \leq b$ and an integer $L$, is it true that\begin{displaymath}
				\max\{c^\transp x: a^\transp x \leq b, x \in \{0,1\}^n\} = L? 
			\end{displaymath}
			\begin{corollary}
				\texttt{EK} is \DP-complete.
			\end{corollary}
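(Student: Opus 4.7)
The plan is to show \emph{EK} is in \DP{} and then reduce \emph{CSS} to \emph{EK}, using Theorem~\ref{theo: css_dpc}.

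First I would verify membership in \DP. The condition $\max\{c^\transp x : a^\transp x \leq b,\, x \in \{0,1\}^n\} = L$ decomposes into two parts: the \NP{} condition that there exists a feasible $x$ with $c^\transp x \geq L$ (a certificate is such an $x$), and the \coNP{} condition that no feasible $x$ satisfies $c^\transp x \geq L+1$ (equivalent to the non-existence of a solution with value strictly greater than $L$). Since $c^\transp x$ is integer-valued, these two conditions together are equivalent to the maximum being exactly $L$, so \emph{EK} is the intersection of an \NP{} language and a \coNP{} language.

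For hardness, I would give a trivial many-one reduction from \emph{CSS}. Given a \emph{CSS} instance $(w, t)$ with $w \in \Z_+^n$, construct the \emph{EK} instance with $c := w$, $a := w$, $b := t$, and $L := t-1$. The value $\max\{w^\transp x : w^\transp x \leq t,\, x \in \{0,1\}^n\}$ equals $t-1$ if and only if (i) some $x \in \{0,1\}^n$ satisfies $w^\transp x = t-1$ (so the optimum is at least $t-1$) and (ii) no $x \in \{0,1\}^n$ satisfies $w^\transp x = t$ (so the optimum does not reach $t$, and by feasibility it cannot exceed $t$). These two conditions coincide exactly with the ``yes'' answer of \emph{CSS} on $(w, t)$. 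The encoding size of the \emph{EK} instance is clearly polynomial in that of $(w,t)$.

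There is no real obstacle here: the main work in the chain was already done by Theorem~\ref{theo: css_dpc}, and the reduction $(w,t) \mapsto (w, w, t, t-1)$ preserves the ``witness plus impossibility'' structure that is shared by \emph{CSS} and \emph{EK}. The only subtlety to mention in the write-up is that one must observe that the knapsack capacity $b = t$ is what simultaneously rules out value $t$ while still permitting value $t-1$, so that the single optimum statement captures both halves of the \emph{CSS} conjunction.
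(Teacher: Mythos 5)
Your proposal is correct and follows essentially the same route as the paper: the paper likewise asserts membership in \DP{} (which you spell out via the standard \NP/\coNP{} decomposition) and uses exactly the reduction $(w,t)\mapsto(c,a,b,L)=(w,w,t,t-1)$, noting that the optimum equals $t-1$ precisely when the \emph{CSS} instance is a ``yes'' instance.
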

			\textbf{Proof.}
			Note that deciding whether the following inequality is true (i.e., the decision problem form of the knapsack problem) is in \NP:\begin{displaymath}
				\max\{c^\transp x: a^\transp x \leq b, x \in \{0,1\}^n\} \geq L? 
			\end{displaymath}
			Similarly, deciding whether the following inequality is true (i.e., the opposite of the decision problem form of the knapsack problem) is in \coNP:\begin{displaymath}
				\max\{c^\transp x: a^\transp x \leq b, x \in \{0,1\}^n\} \leq L? 
			\end{displaymath}
			Therefore, \texttt{EK} is in \DP. We next show a reduction from \texttt{CSS} to \texttt{EK}.
			Let $(w, t)$ be a \texttt{CSS} instance. Then this instance has ``yes" answer if and only if $\max\{w^\transp x: w^\transp x \leq t, x \in \{0,1\}^n\} = t-1$, which is a ``yes" answer to a particular \texttt{EK} instance.
			\hfill \Halmos
		}		
		We can use Theorem \ref{theo: css_dpc} to derive the \DP-completeness of the \emph{knapsack supporting hyperplane problem}. 
		
		\begin{theorem}
			\label{theo: supporting_kp_dpc}
			The knapsack supporting hyperplane problem is \DP-complete.
		\end{theorem}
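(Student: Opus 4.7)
The plan is to prove DP-membership and DP-hardness separately, with the hardness obtained by a direct reduction from \emph{CSS} (Theorem~\ref{theo: css_dpc}).

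For membership, I would observe that the \emph{KP supporting hyperplane problem} decomposes into the conjunction of two sub-questions on the instance $(a,b,\alpha,\beta)$: (i) validity, \emph{i.e.}, for every $x\in\{0,1\}^n$ with $a^\transp x\le b$, one has $\alpha^\transp x\le\beta$, which is a \coNP{} property; and (ii) non-empty intersection with the hyperplane, \emph{i.e.}, there exists $x\in\{0,1\}^n$ with $a^\transp x\le b$ and $\alpha^\transp x=\beta$, which is an \NP{} property. Note that although the KP is the convex hull of $0/1$ vectors, if the inequality is valid then the hyperplane meets the KP if and only if it meets some vertex, so restricting (ii) to $0/1$ vectors is without loss.

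For hardness, given a \emph{CSS} instance $(w,t)$ with $w\in\Z^n_+$, I would construct the \emph{KP supporting hyperplane} instance with knapsack constraint $w^\transp x\le t$, $x\in\{0,1\}^n$, and candidate inequality $w^\transp x\le t-1$. The encoding size is clearly polynomial. The correctness is immediate from the definition of \emph{CSS}: the inequality $w^\transp x\le t-1$ is valid for $\conv(\{x\in\{0,1\}^n: w^\transp x\le t\})$ if and only if there is no $T\subseteq[n]$ with $w(T)=t$ (since otherwise $\chi_T$ is a feasible point violating it, and conversely any $0/1$ point feasible for $w^\transp x\le t$ that violates $w^\transp x\le t-1$ must satisfy $w^\transp x=t$). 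Simultaneously, the hyperplane $w^\transp x=t-1$ intersects the KP if and only if there exists $S\subseteq[n]$ with $w(S)=t-1$. Thus the constructed instance is a ``yes'' instance of the \emph{KP supporting hyperplane problem} exactly when both clauses of the \emph{CSS} instance hold.

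I do not expect a serious obstacle here: the construction mirrors the two parts of the \emph{CSS} definition onto the two parts of the supporting hyperplane definition. The only subtlety to be careful about is the equivalence between ``the hyperplane meets the convex hull'' and ``the hyperplane meets some $0/1$ feasible point'', which relies on validity (so that any convex combination lying on the hyperplane forces each constituent vertex onto the hyperplane); this is a standard observation but worth stating explicitly. Combining the two directions with Theorem~\ref{theo: css_dpc} yields the claimed \DP-completeness.
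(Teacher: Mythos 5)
Your proposal is correct and matches the paper's argument: the same reduction from \emph{CSS} with knapsack constraint $w^\transp x\le t$ and candidate inequality $w^\transp x\le t-1$, with the observation that validity corresponds to the nonexistence of $T$ with $w(T)=t$ and intersection to the existence of $S$ with $w(S)=t-1$. Your explicit remarks on \DP-membership and on reducing ``hyperplane meets the convex hull'' to ``hyperplane meets a $0/1$ point'' are fine additions but do not change the route.
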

		\textbf{Proof.}
		By Theorem~\ref{theo: css_dpc}, it suffices to establish that \texttt{CSS} is reducible to the \emph{knapsack supporting hyperplane problem}. Given a \texttt{CSS} instance $(w, t)$, consider the following instance of the \emph{knapsack supporting hyperplane problem}: Given an inequality $\sum_{i=1}^n w_i x_i \leq t-1$, is it true that this inequality is valid for the KP associated with $\{x \in \{0,1\}^n : \sum_{i=1}^n w_i x_i \leq t\}$ and the corresponding hyperplane has a nonempty intersection with the KP? 
		It is easy to see that this \emph{knapsack supporting hyperplane} instance has a ``yes" answer if and only if $\sum_{i=1}^n w_i x_i = t$ has no solution over $x \in \{0,1\}^n$, but there exists $x^* \in \{0,1\}^n$ such that $\sum_{i=1}^n w_i x^*_i = t-1$. This is equivalent to saying that the \texttt{CSS} instance $(w, t)$ has a ``yes" answer.\hfill \Halmos
		
		\section{\DP-Completeness of \texttt{KNAPSACK FACETS}}
		\label{sec: kp_facet}
		
		In this section, we are going to resolve the conjecture raised by Hartvigsen and Zemel \cite{hartvigsen1992complexity}: \texttt{KNAPSACK FACETS} is \DP-complete.
		
		Before proving the main result of this section, we first present some results regarding the following ``shifted'' Fibonacci sequence $(f_i)_{i=1}^\infty$ defined as:
		\begin{align}
			&f_1 = f_2 = f_3 = 1, 
			\label{eq:gu_fibonacci1}\\
			&f_i = f_{i-2} + f_{i-1},\quad i \geq 4.
			\label{eq:gu_fibonacci2}
		\end{align}
		The idea of incorporating the sequence $f$ into the reduction that we will use later to prove the main result, is motivated by the constructive example in \cite{gu1995lifted,chen2021complexity}, where $f$ is used to construct a hard instance for sequentially lifting a cover inequality.
		
		For this particular sequence $f$, we have the following observations, which can be easily verified by induction.
		\begin{lemma}[\cite{chen2021complexity}]
			\label{obs: 1}
			For $j \geq 3, f_j = \sum_{i=1}^{j-2} f_i$. 
		\end{lemma}
		\begin{lemma}[\cite{chen2021complexity}]
			\label{obs: 2}
			For $j\geq 3, \frac{\sqrt{2}-1}{4}\sqrt{2}^j \leq f_j \leq 2^j$. 
		\end{lemma}
		The sequence $f$ also has the following nice property.
		\begin{lemma}[\cite{chen2021complexity}]
			\label{lem: f_property}
			Let $f$ be defined as in \eqref{eq:gu_fibonacci1}-\eqref{eq:gu_fibonacci2} and $r \geq 1$ be a given integer. For any $\tau \in \Z_+$ satisfying $0 \leq \tau \leq \sum_{i=1}^{2r+1}f_i$, there exists a subset $S \subseteq [2r+1]$ such that $f(S) = \tau$. 
		\end{lemma}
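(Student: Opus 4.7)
The plan is to prove the lemma by induction on $r$, using Observation~\ref{obs: 1} as the key identity. Throughout, write $M_r := \sum_{i=1}^{2r+1} f_i$. For the base case $r=1$, the set $\{f_1,f_2,f_3\}=\{1,1,1\}$ clearly realizes every integer in $\{0,1,2,3\}=[0,M_1]$ as a subset sum, so the claim holds.

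For the inductive step, I would assume the lemma for $r$ and prove it for $r+1$. Two consequences of Observation~\ref{obs: 1} drive the argument: first, $f_{2r+3}=\sum_{i=1}^{2r+1}f_i=M_r$; and second, $f_{2r+2}=\sum_{i=1}^{2r}f_i=M_r-f_{2r+1}\le M_r$. Hence $M_{r+1}=M_r+f_{2r+2}+f_{2r+3}$ decomposes naturally into three consecutive intervals, and I would handle $\tau\in[0,M_{r+1}]$ by a three-case split:
\begin{itemize}
\item[(i)] If $\tau\in[0,M_r]$, apply the inductive hypothesis directly to obtain $S\subseteq[2r+1]$ with $f(S)=\tau$.
\item[(ii)] If $\tau\in(M_r,M_r+f_{2r+2}]$, set $\tau':=\tau-f_{2r+2}$. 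Then $\tau'\in(M_r-f_{2r+2},M_r]=(f_{2r+1},M_r]\subseteq[0,M_r]$, so by induction $\tau'=f(S')$ for some $S'\subseteq[2r+1]$, and $S:=S'\cup\{2r+2\}$ works.
\item[(iii)] If $\tau\in(M_r+f_{2r+2},M_{r+1}]$, set $\tau'':=\tau-f_{2r+2}-f_{2r+3}$. Then $\tau''\in(M_r-f_{2r+3},M_r]=(0,M_r]$ by the identity $f_{2r+3}=M_r$, so by induction $\tau''=f(S'')$ for some $S''\subseteq[2r+1]$, and $S:=S''\cup\{2r+2,2r+3\}$ works.
\end{itemize}
In every case the chosen subset lies in $[2r+3]$ and sums to $\tau$, completing the induction.

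The only place where one has to be careful is the third case: at first glance, one might attempt to reduce $\tau\in(M_r+f_{2r+2},M_{r+1}]$ by removing only $f_{2r+3}$, but then $\tau-f_{2r+3}$ can exceed $M_r$ and the inductive hypothesis fails to apply. The key observation that rescues the argument is the exact equality $f_{2r+3}=M_r$ from Observation~\ref{obs: 1}, which ensures that after removing \emph{both} $f_{2r+2}$ and $f_{2r+3}$ the residual drops back into $(0,M_r]$. This is really the only nontrivial point; the rest is bookkeeping.
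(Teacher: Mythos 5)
Your proof is correct. Note, however, that the paper does not prove this lemma at all: it is imported verbatim as Lemma~4.1 of the cited reference \cite{chen2021complexity}, so there is no in-paper argument to compare against; what you have written is a valid, self-contained replacement. Your induction on $r$ (stepping by two indices at a time) is sound: the three intervals $[0,M_r]$, $(M_r,M_r+f_{2r+2}]$, $(M_r+f_{2r+2},M_{r+1}]$ cover $[0,M_{r+1}]$, and in each case the residual value lands in $[0,M_r]$ exactly because $f_{2r+2}\le M_r$ and $f_{2r+3}=M_r$ by Observation~\ref{obs: 1}, so the inductive hypothesis applies; your remark about why case~(iii) must remove both $f_{2r+2}$ and $f_{2r+3}$ is accurate. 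For what it is worth, a slightly leaner route is a one-index-at-a-time induction (equivalently, a greedy argument) on the claim that every $\tau\in[0,\sum_{i=1}^{m}f_i]$ is a subset sum of $f_1,\dots,f_m$: if $\tau\geq f_m$ take index $m$ and recurse on $\tau-f_m\leq\sum_{i=1}^{m-1}f_i$, and if $\tau<f_m$ then $\tau<f_m=\sum_{i=1}^{m-2}f_i\leq\sum_{i=1}^{m-1}f_i$ by Observation~\ref{obs: 1}, so recurse without $m$. This avoids the parity bookkeeping and the three-case split, but it proves the same statement with the same key identity, so the difference is one of economy rather than substance.
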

		Using the same argument for \texttt{TSP FACETS} in Section \ref{sec:intro}, we have that \texttt{KNAPSACK FACETS} is in \DP. We are now ready to prove the main result of this section.
		\begin{theorem}
			\label{theo: kp_facet_dpc}
			\texttt{KNAPSACK FACETS} is \DP-complete.
		\end{theorem}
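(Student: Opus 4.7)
The plan is to establish both membership in \DP\ and \DP-hardness for \emph{KNAPSACK FACETS}. For membership, the problem decomposes into verifying that the candidate inequality is valid for the KP (which is in \coNP\ by Hartvigsen and Zemel) and that the corresponding hyperplane supports $\dim(P)$ affinely independent vertices of the KP (which is in \NP\ by guessing and verifying those vertices; the dimension of a knapsack polytope is easy to compute from $(a,b)$).

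The core task is \DP-hardness, which I plan to establish by reducing \emph{CSS} to \emph{KNAPSACK FACETS}. Given an instance $(w, t)$, I would choose $r$ large enough that $f([2r+1])$ dominates the relevant data from $(w,t)$, let $m = 2r+1$, and introduce auxiliary variables $y_1, \ldots, y_m$ with coefficients $f_1, \ldots, f_m$. The reduction would produce a knapsack constraint of the form $N w^{\transp} x + f^{\transp} y \le B$ for a carefully chosen scaling $N$ and right-hand side $B$, together with a candidate facet-inequality obtained by shifting the same left-hand side by a specific amount.

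The analysis then splits into two main steps. First, by choosing $N$ appropriately and invoking Lemma~\ref{lem: f_property}, I would argue that validity of the candidate inequality reduces to the \coNP\ half of \emph{CSS}: the inequality holds throughout the KP if and only if no $T \subseteq [n]$ satisfies $w(T) = t$. This step is a structural extension of the proof of Theorem~\ref{theo: supporting_kp_dpc}: the $f$-sequence controls which values of $w^{\transp} x$ can be produced on the violating side of the hyperplane, and the scaling $N$ is used to pin that value to exactly $t$. Second, assuming a witness $x^{\star}$ with $w^{\transp} x^{\star} = t - 1$, I would exhibit $n + m$ affinely independent points on the supporting hyperplane, inside the KP, by pairing local modifications of $x^{\star}$ (and of $y$) with auxiliary $y$-vectors whose $f$-sums compensate exactly for the induced change in $w^{\transp} x$; Lemma~\ref{lem: f_property} guarantees that all required subset sums are realizable.

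The main obstacle will be the tension between the two steps. Making $N$ large enough to isolate $w^{\transp} x = t$ in the validity analysis tends to pin the $y$-coordinates down on the hyperplane (typically to the all-ones vector), which collapses the affine independence needed for a facet; on the other hand, taking $N$ small enough to let $y$ vary freely on the hyperplane tends to let other values of $w^{\transp} x$ in $[t - f([m]),\, t + f([m])]$ also trigger violations, giving too strong a validity condition. Resolving this tension---presumably by structuring the construction so that several distinct $(w^{\transp} x, f^{\transp} y)$ pairs lie on the hyperplane while only the single pair with $w^{\transp} x = t$ falls into the ``bad'' region for validity---will be the crux of the proof. The Fibonacci-like growth of the sequence $f$ and the subset-sum completeness provided by Lemma~\ref{lem: f_property} are the essential tools for striking this balance and, once it is struck, for promoting the single \emph{CSS}-witness $x^{\star}$ into $n+m-1$ independent affine directions along the face.
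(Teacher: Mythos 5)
Your membership argument and your choice of reduction source (\emph{CSS}, via Theorem~\ref{theo: css_dpc}) together with the tools (the sequence $f$ and Lemma~\ref{lem: f_property}) match the paper's strategy in spirit, but the hardness argument has a genuine gap: the construction that would carry it is never produced, and you acknowledge this yourself. The ``tension'' you describe---scaling large enough to pin $w^\transp x=t$ for the validity analysis versus leaving enough freedom in the $y$-block to generate the affinely independent tight points---is exactly the crux, and ``presumably by structuring the construction so that\ldots'' does not resolve it. Moreover, the form you propose (knapsack constraint $Nw^\transp x+f^\transp y\le B$ with the facet candidate having the same left-hand side and a shifted right-hand side) contains no mechanism coupling the \NP-half of \emph{CSS} (existence of $S$ with $w(S)=t-1$) to the difference between ``supporting'' and ``facet-defining''; at best such a construction reproduces the supporting-hyperplane hardness of Theorem~\ref{theo: supporting_kp_dpc}, which is strictly weaker than what is needed.

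The paper resolves the tension with two devices absent from your sketch. First, $a$ and $\alpha$ are \emph{not} proportional: the Fibonacci block of $a$ is scaled by $t$ while the $w$-block is scaled by $t+1$ (with an extra $+1$ in one coordinate); see \eqref{eq: a} and \eqref{eq: alpha}. Dividing the knapsack constraint by $t$ then produces a ceiling term in $\sum_i w_i\bar x_i/t$ that eats exactly one unit of $\alpha$-slack once $\sum_i w_i\bar x_i$ reaches $t$, and a case analysis shows that $\alpha^\transp x\le\beta$ is valid if and only if no $S$ has $w(S)=t$. Second, two extra variables are appended: one with a very large coefficient, whose tight points---obtained from an inductive facetness argument on the Fibonacci sub-block (Claims~\ref{claim: 2}--\ref{claim: 4})---supply $N-2$ affinely independent tight points \emph{unconditionally}; and one with $\alpha$-coefficient $0$ and knapsack coefficient $t+1$, so that a tight point with that coordinate equal to $1$ exists if and only if some $S$ has $w(S)=t-1$. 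That zero-coefficient variable is precisely what converts the \emph{CSS} witness into the one missing affine direction; without some device playing this role, your plan cannot separate facetness from mere support, so the proposal as it stands does not constitute a proof.
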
 
		\textbf{Proof.}
		It suffices to show that \texttt{CSS} is reducible to \texttt{KNAPSACK FACETS}, as \texttt{CSS} is \DP-complete according to Theorem~\ref{theo: css_dpc}. Consider any \texttt{CSS} instance $(w, t)$, i.e., ``is it true that there exists $S \subseteq [n]$ such that $w(S) = t-1$, but there does not exist $T \subseteq [n]$ such that $w(T) = t$?" Without loss of generality, here we assume that $w_i \leq t-1$ for all $i \in [n]$ and $t\geq 2$. 
		
		We next construct a \texttt{KNAPSACK FACETS} instance. Let $L = w([n]),~r = \lceil \log_2(30L+20) - 1\rceil$, and
		\begin{align}
			& a_i = 
			\begin{cases}
				t f_i,        & i =1, \ldots, 2r+1, \\
				t(2L+1)+1, & i = 2r+2,\\
				(t+1)w_{i-2r-2},~
				&  i = 2r+3, \ldots, 2r+n+2, \\
				t f_{2r+1} + t^2 + t(2L+2) + 1,~
				&  i =2r+n+3, \\
				t + 1, &  i = 2r+n+4.
			\end{cases}
			\label{eq:a}
			\\
			& b = t \sum_{i=1}^{2r+1}f_i  + t^2 + t (2L+2) + 1,   \label{eq:b}\\
			& \alpha_i = 
			\begin{cases}
				f_i,        &  i =1, \ldots, 2r+1, \\
				2L+2, &  i = 2r+2,\\
				w_{i-2r-2}, ~&  i = 2r+3, \ldots, 2r+n+2, \\
				f_{2r+1} + t + 2L+1,  &  i =2r+n+3, \\
				0, &  i = 2r+n+4.
			\end{cases}
			\label{eq:alpha}
			\\ 
			& \beta = \sum_{i=1}^{2r+1}f_i  + t + 2L + 1.   \label{eq:beta}
		\end{align}
		Here $N:=2r+n+4$ is the dimension of the vectors $a$ and $\alpha$.
		Consider the following instance of \texttt{KNAPSACK FACETS}: Given an inequality $\alpha^\transp x \leq \beta$ and a KP $\conv(\{x \in \{0,1\}^N: a^\transp x \leq b\})$, is this inequality facet-defining to the KP? 
		It is easy to verify that the input size of this \texttt{KNAPSACK FACETS} instance is polynomial in that of the \texttt{CSS} instance $(w,t)$. 
		To complete the proof of this theorem, we are going to show: there is a ``yes" answer to the \texttt{CSS} instance $(w, t)$ if and only if $\alpha^\transp x \leq \beta$ is a facet-defining inequality to the KP $\conv(\{x \in \{0,1\}^N: a^\transp x \leq b\})$. 
		
		Given the \texttt{CSS} instance and the \texttt{KNAPSACK FACETS} instance, we have the following claim.
		\begin{claim}
			\label{claim: 1}
			$\sum_{i=1}^{2r}f_i > 3L+2$.
		\end{claim}
		\textbf{Proof of claim.}
		The claim follows from
		$\sum_{i=1}^{2r}f_i = f_{2r+2} \geq \frac{\sqrt{2}-1}{4}\sqrt{2}^{2r+2} >  2^{r+1}/10 \geq 2^{\log_2(30L+20)}/10 = 3L+2$,
		where the first equality is from Lemma~\ref{obs: 1}, the second inequality is from Lemma~\ref{obs: 2} and the last inequality is from the definition of $r$.
		\hfill$\diamond$
		
		In order to determine whether the inequality $\alpha^\transp x \leq \beta$ is facet-defining to the KP $\conv(\{x \in \{0,1\}^N: a^\transp x \leq b\})$, where $N = 2r+n+4$, first, we prove that for the restricted variable space where $x_{N-1} = x_{N} = 0$, one similar inequality $\alpha^\transp x \leq \sum_{i=1}^{2r} f_i$ is indeed facet-defining to $\conv(\{x \in \{0,1\}^N: a^\transp x \leq \sum_{i=1}^{2r} t f_i\})$. Notice that only the right-hand-side values of the inequality and knapsack constraint are different from the target. 
		This is shown through a series of the next 3 claims. 
		\begin{claim}
			\label{claim: 2}
			Inequality $\sum_{i=1}^{2r+1} \alpha_i x_i \leq  \sum_{i=1}^{2r} f_i$ is a facet-defining inequality for the KP $\conv(\{x \in \{0,1\}^{2r+1}: \sum_{i=1}^{2r+1} a_i x_i \leq \sum_{i=1}^{2r} t f_i\})$. 
		\end{claim}
		\textbf{Proof of claim.}
		Note that for $\gamma=1,\ldots,r$,
		by our definition in \eqref{eq:a}-\eqref{eq:beta}, both inequalities $\sum_{i=1}^{2\gamma+1} \alpha_i x_i \leq  \sum_{i=1}^{2\gamma} f_i$ and $\sum_{i=1}^{2\gamma+1} a_i x_i \leq \sum_{i=1}^{2\gamma} t f_i$ are equivalent to $\sum_{i=1}^{2\gamma+1} f_i x_i \leq \sum_{i=1}^{2\gamma} f_i$. 
		We prove a stronger version of the claim: For $\gamma=1,\ldots,r$, inequality $\sum_{i=1}^{2\gamma+1} \alpha_i x_i \leq  \sum_{i=1}^{2\gamma} f_i$ is a facet-defining inequality for the KP $\conv(\{x \in \{0,1\}^{2\gamma+1}: \sum_{i=1}^{2\gamma+1} a_i x_i \leq \sum_{i=1}^{2\gamma} t f_i\})$. 
		We proceed by induction on $\gamma$. When $\gamma=1$, the claim is: $x_1 + x_2 + x_3 \leq 2$ is facet-defining for $\conv(\{x \in \{0,1\}^3: x_1 +  x_2 +  x_3 \leq 2\})$, which is obviously true. Assume that this claim is true when $\gamma = R-1$ for some integer $R \in[2,r-1]$: $\sum_{i=1}^{2R-1} f_i x_i \leq \sum_{i=1}^{2R-2} f_i$ is a facet-defining inequality for $\conv(\{x \in \{0,1\}^{2R-1}: \sum_{i=1}^{2R-1} f_i x_i \leq \sum_{i=1}^{2R-2} f_i\})$. So there exists affinely independent points $v_1, \ldots, v_{2R-1}\in\{0,1\}^{2R-1}$, satisfying $\sum_{i=1}^{2R-1} f_i x_i \leq \sum_{i=1}^{2R-2} f_i$ at equality. For $v\in\{0,1\}^{2R-1}$, let $(v, 0, 1)$ denote the binary point in $\{0,1\}^{2R+1}$ obtained by appending to $v$ two new components with values $0$ and $1$. It is then easy to verify that, for any $j \in [2R-1], x=(v_j, 0,1)$ satisfies $\sum_{i=1}^{2R+1} f_i x_i \leq \sum_{i=1}^{2R} f_i$ at equality as $f_{2R+1} = f_{2R} + f_{2R-1}$. Define $p := (1, \ldots, 1, 0) \in \{0,1\}^{2R+1}$, then $\sum_{i=1}^{2R+1} f_i p_i = \sum_{i=1}^{2R} f_i$. Define $q := (0, \ldots, 0, 1,1) \in \{0,1\}^{2R+1}$, then $\sum_{i=1}^{2R+1} f_i q_i = f_{2R} + f_{2R+1} = \sum_{i=1}^{2R} f_i$. Here the last equality is from Lemma~\ref{obs: 1}. Therefore, we have obtained the following $2R+1$ binary points in $\{0,1\}^{2R+1}: (v_1, 0, 1), \ldots, (v_{2R-1}, 0, 1),p, q$, where $v_1, \ldots, v_{2R-1}$ are affinely independent in $\{0,1\}^{2R-1}$. It is easy to see that these $2R+1$ binary points are affinely independent in $\{0,1\}^{2R+1}$, and satisfy $\sum_{i=1}^{2R+1} f_i x_i \leq \sum_{i=1}^{2R} f_i$ at equality. 
		\hfill$\diamond$
		
		\begin{claim}
			\label{claim: 3}
			Inequality $\sum_{i=1}^{2r+2} \alpha_i x_i \leq  \sum_{i=1}^{2r} f_i$ is a facet-defining inequality for the KP $\conv(\{x \in \{0,1\}^{2r+2}: \sum_{i=1}^{2r+2} a_i x_i \leq \sum_{i=1}^{2r} t f_i\})$. 
		\end{claim}
		\textbf{Proof of claim.}
		First, let's verify that $\sum_{i=1}^{2r+2} \alpha_i x_i \leq  \sum_{i=1}^{2r} f_i$ is valid for such KP. When $x_{2r+2} = 0$, it is trivially valid. When $x_{2r+2} = 1$, the knapsack constraint implies that $\sum_{i=1}^{2r+1} t f_i x_i \leq \sum_{i=1}^{2r} t f_i - t(2L+1) - 1$. In this case $\sum_{i=1}^{2r+1} f_i x_i \leq \sum_{i=1}^{2r} f_i - 2L - 2$, which means that $\sum_{i=1}^{2r+1} f_i x_i + (2L+2)x_{2r+2} \leq  \sum_{i=1}^{2r} f_i$ is a valid inequality. Second, from the last Claim~\ref{claim: 2}, it suffices to show that there exists a binary point $x^* \in \{0,1\}^{2r+2}$ with $x^*_{2r+2} = 1$, such that $\sum_{i=1}^{2r+1} a_i x^*_i + a_{2r+2}  \leq \sum_{i=1}^{2r} t f_i$ while $\sum_{i=1}^{2r+1} \alpha_i x^*_i + \alpha_{2r+2} =  \sum_{i=1}^{2r} f_i$. By definitions of $a$ in \eqref{eq:a} and $\alpha$ in \eqref{eq:alpha}, it suffices to find a binary point $x^*$, such that $\sum_{i=1}^{2r+1} f_i x^*_i = \sum_{i=1}^{2r}f_i - 2L - 2$. 
		From the above Claim~\ref{claim: 1}, $\sum_{i=1}^{2r}f_i - 2L - 2 \geq L$. By Lemma~\ref{lem: f_property}, we know that such binary point $x^*$ must exist. 
		\hfill$\diamond$
		
		\begin{claim}
			\label{claim: 4}
			Inequality $\sum_{i=1}^{2r+n+2} \alpha_i x_i \leq  \sum_{i=1}^{2r} f_i$ is a facet-defining inequality for the KP $\conv(\{x \in \{0,1\}^{2r+n+2}: \sum_{i=1}^{2r+n+2} a_i x_i \leq \sum_{i=1}^{2r} t f_i\})$.
		\end{claim}
		\textbf{Proof of claim.}
		By definitions of $a$ in \eqref{eq:a} and $\alpha$ in \eqref{eq:alpha}, we need to show that inequality
		\begin{multline}
			\sum_{i=1}^{2r+1} f_i x_i + \left( 2L+2 \right) x_{2r+2}
			+\sum_{i=2r+3}^{2r+n+2} w_{i-2r-2} x_i \leq  \sum_{i=1}^{2r} f_i
			\label{eq:claim_s_ineq}
		\end{multline}
		is facet-defining for the KP defined by the following knapsack constraint:
		\begin{multline}
			\sum_{i=1}^{2r+1} t f_i x_i + \big( t(2L+1) + 1 \big) x_{2r+2}
			+ \sum_{i=2r+3}^{2r+n+2} (t+1) w_{i-2r-2} x_i \leq \sum_{i=1}^{2r} t f_i.
			\label{eq:claim_s_kp}
		\end{multline}
		First of all, we verify that inequality \eqref{eq:claim_s_ineq} is indeed valid for the KP defined by constraint \eqref{eq:claim_s_kp}. 
		For any binary point $x \in \{0,1\}^{2r+n+2}$, knapsack constraint \eqref{eq:claim_s_kp} implies that 
		\begin{multline*}
			\sum_{i=1}^{2r+1} f_i x_i \leq \sum_{i=1}^{2r}  f_i
			- (2L+1) x_{2r+2}- \sum_{i=2r+3}^{2r+n+2} w_{i-2r-2} x_i
			- \left\lceil \frac{x_{2r+2} + \sum_{i=2r+3}^{2r+n+2} w_{i-2r-2} x_i}{t} \right\rceil.
		\end{multline*}
		Therefore, we have
		\begin{multline*}
			\sum_{i=1}^{2r+1} f_i x_i + \left( 2L+2 \right) x_{2r+2} +  \sum_{i=2r+3}^{2r+n+2} w_{i-2r-2} x_i \\
			\leq\sum_{i=1}^{2r}  f_i + x_{2r+2} - \left\lceil \frac{x_{2r+2} + \sum_{i=2r+3}^{2r+n+2} w_{i-2r-2} x_i}{t} \right\rceil
			\leq \sum_{i=1}^{2r}  f_i,
		\end{multline*}
		i.e., inequality \eqref{eq:claim_s_ineq} is valid.
		To complete the proof, it suffices to show that there exist $2r+n+2$ affinely independent binary points satisfying the knapsack constraint \eqref{eq:claim_s_kp}, on which \eqref{eq:claim_s_ineq} holds at equality. 
		From the last Claim~\ref{claim: 3}, we can find $2r+2$ affinely independent binary points $v_1, \ldots, v_{2r+2}$ in $\{0,1\}^{2r+2}$ satisfying 
		$\sum_{i=1}^{2r+1} f_i x_i + \big( 2L+2 \big) x_{2r+2}= \sum_{i=1}^{2r} f_i$ and $\sum_{i=1}^{2r+1} t f_i x_i + \left( t(2L+1) + 1 \right) x_{2r+2}  \leq \sum_{i=1}^{2r} t f_i.$ It implies that $(v_1, 0, \ldots, 0), \ldots ,(v_{2r+2}, 0, \ldots, 0)$ are affinely independent in $\{0,1\}^{2r+n+2}$, satisfying the knapsack constraint \eqref{eq:claim_s_kp}, and satisfying \eqref{eq:claim_s_ineq} at equality. Now, for each $i \in [n]$, consider $\sum_{i=1}^{2r} f_i - 2L - 2 - w_i$. By Claim~\ref{claim: 1}, we know that $\sum_{i=1}^{2r} f_i - 2L - 2 - w_i \geq 0$. So from Lemma~\ref{lem: f_property}, we can find $x^*$ with $x^*_{2r+2} = x^*_{2r+2+i} = 1$, and $x^*_{2r+2+j} = 0$ for all $j \in [n] \setminus \{i\}$, and $\sum_{i=1}^{2r+1} f_i x^*_i = \sum_{i=1}^{2r} f_i - 2L - 2 - w_i$. Also note that $w_i \leq t-1$. It is then easy to verify that $x^*$ satisfies the knapsack constraint \eqref{eq:claim_s_kp}, and satisfies \eqref{eq:claim_s_ineq} at equality. Therefore, we have found in total $2r+n+2$ binary points that satisfy knapsack constraint \eqref{eq:claim_s_kp}, and satisfy \eqref{eq:claim_s_ineq} at equality. Moreover, these $2r+n+2$ points are obviously affinely independent. 
		\hfill$\diamond$

		Now, we are ready to prove the validity of the reduction: there is a ``yes" answer to the \texttt{CSS} instance $(w, t)$ if and only if $\alpha^\transp x \leq \beta$ is a facet-defining inequality for the KP $\conv(\{x \in \{0,1\}^N: a^\transp x \leq b\})$. 
		
		We first verify that $w(S) \neq t$ for all $S \subseteq [n]$ if and only if inequality $\alpha^\transp x \leq \beta$ is valid for the KP defined by $a^\transp x \leq b$.
		In other words, we would like to show that $w(S) \neq t$ for all $S \subseteq [n]$ if and only if for all $\bar x \in \{0,1\}^N$ with $a^\transp \bar x \leq b$, we have $\alpha^\transp \bar x \leq \beta$. Consider an arbitrary $\bar{x}\in \{0,1\}^N$ with $a^\transp \bar x \leq b$.
		Depending on the values of $\bar x_{N-1}$ and $\bar x_{N}$, we consider the following four cases.
		\begin{enumerate}
			\item[(a)] $\bar x_{N-1} = 1, \bar x_N = 0$. In this case, $a^\transp x \leq b$ reduces to $\sum_{i=1}^{2r+n+2} a_i x_i \leq \sum_{i=1}^{2r} t f_i$, and $\alpha^\transp  x \leq \beta$ is the same as $\sum_{i=1}^{2r+n+2} \alpha_i x_i \leq  \sum_{i=1}^{2r} f_i$. From Claim~\ref{claim: 4}, we have that $\alpha^\transp \bar x \leq \beta$ is always satisfied in this case. 
			\item[(b)] $\bar x_{N-1} = 1, \bar x_N = 1$. From $a^\transp \bar x \leq b$, we have
			\begin{multline*}
				\sum_{i=1}^{2r+1} t f_i \bar x_i + \big( t (2L+1)+1\big) \bar x_{2r+2} + (t+1) \sum_{i=1}^n w_i \bar x_{i+2r+2} \leq \sum_{i=1}^{2r} t f_i - t - 1.
			\end{multline*}
			Since $\bar{x}\in\{0,1\}^N$, we have $\sum_{i=1}^{2r+1}  f_i \bar x_i  \in \Z$. It implies that
			\begin{multline*}
				\sum_{i=1}^{2r+1} f_i \bar x_i \leq \sum_{i=1}^{2r} f_i -1 - (2L+1) \bar x_{2r+2}
				- \sum_{i=1}^n w_i \bar x_{i+2r+2} - \left\lceil \frac{1+\bar x_{2r+2} + \sum_{i=1}^n w_i \bar x_{i+2r+2}}{t}\right\rceil.
			\end{multline*}
			Hence, in this case we always have
			\begin{multline*}
				\alpha^\transp \bar x = \sum_{i=1}^{2r+1} f_i \bar x_i + (2L+2) \bar x_{i+2r+2}
				+ \sum_{i=1}^n w_i \bar x_{i+2r+2} + f_{2r+1} + t + 2L+1 \\
				\leq  \sum_{i=1}^{2r+1} f_i + \bar x_{2r+2} + t + 2L - \left\lceil \frac{1+\bar x_{2r+2} + \sum_{i=1}^n w_i \bar x_{i+2r+2}}{t}\right\rceil 
				\leq \sum_{i=1}^{2r+1} f_i  + t + 2L = \beta-1. 
			\end{multline*}
			\item[(c)] $\bar x_{N-1} = 0, \bar x_N = 0$. In this case, if $\bar x_{2r+2} = 0$, then $\alpha^\transp \bar x \leq \sum_{i=1}^{2r+1} f_i + \sum_{i=1}^n w_i < \beta$. So we assume $\bar x_{2r+2} = 1$. Then from $a^\transp \bar x \leq b$, we have\begin{displaymath}
				\sum_{i=1}^{2r+1} t f_i \bar x_i + (t+1) \sum_{i=1}^n w_i \bar x_{i+2r+2} \leq \sum_{i=1}^{2r+1} t f_i + t^2 + t.
			\end{displaymath}
			This implies
			\begin{multline}
				\sum_{i=1}^{2r+1} f_i \bar x_i \leq \sum_{i=1}^{2r+1} f_i + t+1
				-\sum_{i=1}^n w_i \bar x_{i+2r+2}-\left\lceil \frac{\sum_{i=1}^n w_i \bar x_{i+2r+2}}{t} \right\rceil.\label{eq:last_00}
			\end{multline}
			Depending on the value of $\sum_{i=1}^n w_i \bar x_{i+2r+2} $, we further consider the following three cases.
			\begin{itemize}
				\item[(c1)] If $\sum_{i=1}^n w_i \bar x_{i+2r+2} \leq t-1$, then $\alpha^\transp \bar x\leq\sum_{i=1}^{2r+2}\alpha_i+\sum_{i=1}^n w_i \bar x_{i+2r+2} \leq \sum_{i=1}^{2r+1} f_i + 2L+2 + t-1 = \beta$.
				\item[(c2)] If $\sum_{i=1}^n w_i \bar x_{i+2r+2} = t$, then consider the new point $\hat{x} \in \{0,1\}^N$ with $\hat{x}_i = 1$ for $i = 1, \ldots, 2r+2, \hat{x}_i = \bar{x}_i$ for $i = 2r+3, \ldots, 2r+n+2, \hat{x}_{N-1} = \hat{x}_N = 0$. We have $a^\transp \hat x = \sum_{i=1}^{2r+1} t f_i + t (2L+1) + 1 + t(t+1) = b$ while $\alpha^\transp \hat x = \sum_{i=1}^{2r+1} f_i + 2L+2 + t = \beta + 1$. So here $\hat{x}$ is in the KP but it does not satisfy the inequality $\alpha^\transp x \leq \beta$. 
				\item[(c3)] If $\sum_{i=1}^n w_i \bar x_{i+2r+2} \geq t+1$, then \eqref{eq:last_00} implies that $\sum_{i=1}^{2r+1} f_i \bar x_i \leq \sum_{i=1}^{2r+1} f_i + t-\sum_{i=1}^n w_i \bar x_{i+2r+2} - 1$. It implies that $\alpha^\transp \bar x = \sum_{i=1}^{2r+1} f_i \bar x_i + 2L+2 +  \sum_{i=1}^n w_i \bar x_{i+2r+2} \leq \beta$. 
			\end{itemize}
			Cases (c1)-(c3) imply that $\alpha^\transp x \leq \beta$ is valid for any binary point $\bar x$ satisfying $a^\transp \bar x \leq b$ and $\bar{x}_{N-1} = \bar{x}_N = 0$ if there does not exist $S \subseteq [n]$ such that $w(S) = t$, in which case (c2) does not happen. On the other hand, if there exists $S \subseteq [n]$ such that $w(S) = t$, then one can construct $\hat{x}$ like in case (c2) to show that $\alpha^\transp x\leq\beta$ is not valid. Therefore, $\alpha^\transp x \leq \beta$ is valid for any binary point $\bar x$ satisfying $a^\transp \bar x \leq b$ and $\bar{x}_{N-1} = \bar{x}_N = 0$ if and only if there does not exist $S \subseteq [n]$ such that $w(S) = t$.
			\item[(d)] $\bar x_{N-1} = 0, \bar x_N = 1$. In this case, if $\bar x_{2r+2} = 0$, then $\alpha^\transp \bar x \leq \sum_{i=1}^{2r+1} f_i + \sum_{i=1}^n w_i < \beta$. So we assume $\bar x_{2r+2} = 1$. Then $a^\transp \bar x \leq b$ reduces to
			\begin{displaymath}
				\sum_{i=1}^{2r+1} t f_i \bar x_i + (t+1) \sum_{i=1}^n w_i \bar x_{i+2r+2} \leq \sum_{i=1}^{2r+1} t f_i + t^2 - 1.
			\end{displaymath}
			This implies 
			\begin{multline}
				\sum_{i=1}^{2r+1} f_i \bar x_i \leq \sum_{i=1}^{2r+1} f_i + t
				-\sum_{i=1}^n w_i \bar x_{i+2r+2}-\left\lceil \frac{1+\sum_{i=1}^n w_i \bar x_{i+2r+2}}{t} \right\rceil.
				\label{eq:last_01}
			\end{multline}
			If $\sum_{i=1}^n w_i \bar x_{i+2r+2} \leq t-1$, then $\alpha^\transp \bar x \leq \sum_{i=1}^{2r+2}\alpha_i+\sum_{i=1}^n w_i \bar x_{i+2r+2}\leq \sum_{i=1}^{2r+1} f_i + 2L+2 + t-1 = \beta$. If $\sum_{i=1}^n w_i \bar x_{i+2r+2} \geq t$, then \eqref{eq:last_01} yields that $\sum_{i=1}^{2r+1} f_i \bar x_i  \leq \sum_{i=1}^{2r+1} f_i + t-\sum_{i=1}^n w_i \bar x_{i+2r+2}-2$. Therefore, $\alpha^\transp \bar x = \sum_{i=1}^{2r+1} f_i \bar x_i + 2L+2 + \sum_{i=1}^n w_i \bar x_{i+2r+2} \leq \beta -1$. 
		\end{enumerate}
		
		From the discussion of the above four cases, we know that for any binary point $\bar x \in \{x \in \{0,1\}^N: a^\transp x \leq b\}$ with $\bar{x}_{N-1} + \bar x_N \geq 1$, inequality $\alpha^\transp \bar x \leq \beta$ always holds. At the same time, inequality $\alpha^\transp x \leq \beta$ is valid for any binary point $\bar x$ satisfying $a^\transp \bar x \leq b$ and $\bar{x}_{N-1} = \bar{x}_N = 0$ if and only if there does not exist $S \subseteq [n]$ such that $w(S) = t$. We have thus concluded that $w(S) \neq t$ for any subset $S \subseteq [n]$ if and only if inequality $\alpha^\transp x \leq \beta$ is valid for $\{x \in \{0,1\}^N: a^\transp x \leq b\}$. 
		
		Lastly, we want to show that there exist $N$ affinely independent points in $\{x \in \{0,1\}^N: a^\transp x \leq b, \alpha^\transp x = \beta\}$ if and only if there exists subset $S \subseteq [n]$ such that $w(S) = t-1$. 
		
		From Claim~\ref{claim: 4}, there exist $v_1, \ldots, v_{N-2} \in \{0,1\}^{N-2}$ that are affinely independent, and they satisfy 
		$\sum_{i=1}^{N-2} a_i x_i \leq \sum_{i=1}^{2r} t f_i$ and $\sum_{i=1}^{N-2} \alpha_i x_i = \sum_{i=1}^{2r}  f_i$. Therefore, points $(v_1, 1, 0), \ldots, (v_{N-2}, 1, 0) \in \{0,1\}^N$ are affinely independent, and they satisfy 
		\begin{multline*}
			\sum_{i=1}^{N} a_i x_i \leq \sum_{i=1}^{2r} t f_i + a_{N-1}
			= \sum_{i=1}^{2r+1} t f_i + t^2 + t(2L+2) + 1 = b
		\end{multline*}
		and 
		\begin{displaymath}
			\sum_{i=1}^{N} \alpha_i x_i = \sum_{i=1}^{2r}  f_i + \alpha_{N-1} = \sum_{i=1}^{2r+1}  f_i + t + 2L + 1 = \beta.
		\end{displaymath}
		
		If there exists $x^* \in \{0,1\}^n$ such that $\sum_{i=1}^n w_i x^*_i = t-1$, then we can construct another two points $p,q$ as follows:
		\begin{align*}
			& p_i = 
			\begin{cases}
				1,        &i =1, \ldots, 2r+2,\\
				x^*_{i-2r-2},        &i = 2r+3, \ldots, 2r+n+2, \\
				0,  &i =2r+n+3,2r+n+4,
			\end{cases}\\
			&q = p + \bfe_N.
		\end{align*}
		Notice that
		\begin{align*}
			a^\transp p \leq a^\transp q = \sum_{i=1}^{2r+1} t f_i + t(2L+1) + 1
			+ \sum_{i=1}^n (t+1) w_i x^*_i + t+1 = b,\\
			\alpha^\transp p = \alpha^\transp q = \sum_{i=1}^{2r+1} f_i + 2L+2 + \sum_{i=1}^n w_i x^*_i = \sum_{i=1}^{2r+1} f_i + t + 2L +1 = \beta.
		\end{align*}
		So both points $p$ and $q$ satisfy $a^\transp x \leq b$ and $\alpha^\transp x = \beta$. It is easy to check that these $N$ points $(v_1, 1, 0), \ldots, (v_{N-2}, 1, 0), p, q$ are affinely independent points in $\{x \in \{0,1\}^N: a^\transp x \leq b, \alpha^\transp x = \beta\}$. 
		
		On the other hand, assume that there exist $N$ affinely independent points in $\{x \in \{0,1\}^N: a^\transp x \leq b, \alpha^\transp x = \beta\}$. Then there must exist a point $p^* \in \{x \in \{0,1\}^N: a^\transp x \leq b, \alpha^\transp x = \beta\}$ with $p^*_N = 1$, since otherwise $\{x \in \{0,1\}^N: a^\transp x \leq b, \alpha^\transp x = \beta\}$ will be contained in the hyperplane given by $x_N = 0$, which violates the assumption. Furthermore, here $p^*_{N-1} = 0$, because if $p^*_{N-1} = 1$, then point $p^*$ falls into the case (b) above, in which case we have $\alpha^\transp p^* \leq \beta - 1$, contradicting the assumption that $\alpha^\transp p^* = \beta$. Hence, we have $p^*_{N-1} = 0, p^*_N = 1$, and $p^*$ falls into the case (d) above. Following the same argument there, in order to have $\alpha^\transp p^* = \beta$, we must have $p^*_{2r+2} = 1$ and $\sum_{i=1}^n w_i p^*_{i+2r+2} \leq t-1$. If $\sum_{i=1}^n w_i p^*_{i+2r+2} \leq t-2$, then $\alpha^\transp p^* \leq \sum_{i=1}^{2r+1} f_i + 2L+2 + \sum_{i=1}^n w_i p^*_{i+2r+2} \leq \sum_{i=1}^{2r+1} f_i + t + 2L = \beta - 1$. Therefore, we must have $\sum_{i=1}^n w_i p^*_{i+2r+2} = t-1$, which means that there exists subset $S \subseteq [n]$ such that $w(S) = t-1$. 
		\smallskip
		
		All in all, we have established that: 
		\begin{itemize}
			\item[(i)] There does not exist $S \subseteq [n]$ such that $w(S) = t$ if and only if inequality $\alpha^\transp x \leq \beta$ is valid for the KP defined by $a^\transp x \leq b$;
			\item[(ii)] There exist $N$ affinely independent points in $\{x \in \{0,1\}^N: a^\transp x \leq b, \alpha^\transp x = \beta\}$ if and only if there exists subset $S \subseteq [n]$ such that $w(S) = t-1$. 
		\end{itemize}
		Combining (i) and (ii), we have shown that there is a ``yes" answer to the \texttt{CSS} problem $(w, t)$ if and only if $\alpha^\transp x \leq \beta$ is a facet-defining inequality to the KP $\conv(\{x \in \{0,1\}^N: a^\transp x \leq b\})$. 
		\hfill \Halmos
		
		\section{\texttt{KNAPSACK FACETS} on Inequalities with a Fixed Number of Distinct Coefficients}\label{sec: xp}
		In the previous section, we have shown that for a general inequality and a knapsack polytope, it is \DP-complete to recognize whether this inequality is facet-defining for the knapsack polytope. However, for an inequality with binary left-hand side coefficients, it is easy to determine whether such an inequality is facet-defining. The following theorem is rephrased from Theorem~1 by Balas \cite{balas1975facets}, and similar results were also shown independently by Hammer et al. \cite{hammer1975facet}, and Wolsey \cite{wolsey1975faces}. 
		\begin{theorem}[\cite{balas1975facets,hammer1975facet,wolsey1975faces}]
			\label{theo: balas_canonical}
			Whether an inequality of the form $\sum_{j \in M} x_j \leq k$ defines a facet of the KP $\conv(\{x\in\{0,1\}^n:a^\transp x\leq b\})$ can be determined in time $\texttt{poly}(n)$.
		\end{theorem}
		In this section, we show a more general result, that is, assuming that each arithmetic operation takes $O(1)$ time, determining whether an inequality $\alpha^\transp x\leq \beta$ with $(\alpha,\beta)\in\bZ^{n+1}_+$ is facet-defining for the knapsack polytope is \emph{slicewise polynomial} ({\XP}) with respect to $|\alpha|_+$, where $|\alpha|_+$ denotes the cardinality of the set $\{\alpha_i:\alpha_i>0,i\in[n]\}$, i.e., the number of distinct positive values that coefficients $\alpha_i$ are taking. Specifically, the problem of determining if $\alpha^\transp x \leq \beta$ is facet-defining for a knapsack polytope can be solved in time $n^{|\alpha|_+}\cdot\texttt{poly}(n)$.
		
		Throughout this section, let $Q$ denote the knapsack set $\{x\in\{0,1\}^n:a^\transp x\leq b\}$ and $P$ denote the associated KP $\conv(Q)$. Recall that $(a,b)\in\N^{n+1}$. Without loss of generality, we assume $\max_i a_i\leq b$, in which case both $Q$ and $P$ are full-dimensional.
		Then up to a positive scaling, any facet-defining inequality $\alpha^\transp x \leq \beta$ of $P$ that is different from the non-negativity constraints $x\geq 0$ must have $(\alpha,\beta)\in\bZ_+^{n+1}$ \cite{hammer1975facet}.
		We also assume that $\alpha^\transp x=\sum_{k=1}^K\gamma_k\sum_{i\in I_k}x_i$, where $K:=|\alpha|_{+}$, $\gamma\in\N^K$, $I_k=\{i_{k-1}+1,i_{k-1}+2,\ldots,i_k\}$ with $0=i_0< i_1<\ldots< i_{K}\leq n$, and vector $a$ is sorted such that $a_{i_{k-1}+1}\leq a_{i_{k-1}+2}\leq\ldots\leq a_{i_k}$ for $k=1,\ldots,K$, and $a_{i_K+1}\leq a_{i_K+2}\leq\ldots\leq a_n$.
		
		A useful tool we use for proving our {\XP} result is the concept of minimal basic knapsack solutions.
		\begin{definition}
			Given vector $z\in\bZ_+^K$ with $0\leq z_k\leq |I_k|$ for $k\in[K]$, we call the vector $x[z]\in\{0,1\}^n$, satisfying the following, the minimal $z$-basic knapsack solution (with respect to $\alpha$):
			\begin{itemize}
				\item $x_{i_{k-1}+1}[z]=x_{i_{k-1}+2}[z]=\ldots=x_{i_{k-1}+z_k}[z]=1$ for $k\in[K]$;
				\item $x_{i_{k-1}+z_k+1}[z]=x_{i_{k-1}+z_k+2}[z]=\ldots=x_{i_{k}}[z]=0$ for $k\in[K]$;
				\item $x_{i_{K}+1}[z]=x_{i_{K}+2}[z]=\ldots=x_{n}[z]=0$,
			\end{itemize}
			i.e., the first $z_k$ components of $x_{I_k}$ being $1$ for $k\in[K]$, and the rest being $0$.
		\end{definition}

		\begin{proposition}\label{prop:validity}
			Validity of the inequality $\alpha^\transp x\leq \beta$ with $(\alpha,\beta)\in\bZ^{n+1}_+$ for the knapsack polytope $P$ can be verified in time $n^{K+O(1)}$.
		\end{proposition}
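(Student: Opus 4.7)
The plan is to reduce the validity check to a bounded enumeration over the ``profile vectors'' $z\in\bZ_+^K$ that count how many variables are set to one in each group $I_k$. Since the coefficients $\alpha_i$ take only $K$ distinct positive values, and $a$ is sorted increasingly within each group $I_k$, the question of whether $\alpha^\transp x\leq \beta$ holds on all of $S$ depends only on the counts $z_k = \sum_{i\in I_k} x_i$ and not on the identities of the chosen indices. Concretely, I would show that the inequality $\alpha^\transp x\leq\beta$ is valid for $P$ if and only if for every $z\in\prod_{k=1}^K\{0,1,\ldots,|I_k|\}$ such that the minimal $z$-basic knapsack solution $x[z]$ satisfies $a^\transp x[z]\leq b$, one has $\sum_{k=1}^K\gamma_k z_k\leq\beta$.

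The first and key step is the exchange argument justifying this reduction. Suppose $x^*\in S$ violates the inequality and define $z_k:=\sum_{i\in I_k}x^*_i$ for $k\in[K]$. Then $\alpha^\transp x[z]=\sum_{k=1}^K\gamma_k z_k=\alpha^\transp x^*>\beta$. Moreover, within each block $I_k$ the entries of $a$ are sorted nondecreasingly, so replacing the $z_k$ ones of $x^*$ inside $I_k$ by the first $z_k$ positions of $I_k$ weakly decreases the $a$-consumption in that block; zeroing out the indices beyond $i_K$ (which carry zero $\alpha$-coefficient) only decreases it further. Hence $a^\transp x[z]\leq a^\transp x^*\leq b$, so $x[z]$ is a minimal basic solution that witnesses violation. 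The converse is immediate since $x[z]\in S$ whenever $a^\transp x[z]\leq b$.

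Given this characterization, the algorithm is to enumerate all $z\in\prod_{k=1}^K\{0,\ldots,|I_k|\}$, check whether $a^\transp x[z]\leq b$, and, if so, verify $\sum_{k=1}^K\gamma_k z_k\leq\beta$. The number of such vectors is at most $\prod_{k=1}^K(|I_k|+1)\leq (n+1)^K$. With a one-time $O(n)$ precomputation of the prefix sums $\sum_{j=1}^{\ell}a_{i_{k-1}+j}$ for each block $I_k$, the two checks for each $z$ take $O(K)=O(n)$ time, so the total running time is $O(n^K\cdot n)=n^{K+O(1)}$, as claimed.

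The main conceptual obstacle is exactly the exchange argument in the second paragraph; once that is in place, the enumeration and complexity bound are routine. A minor point worth being careful about is handling variables $i>i_K$ with $\alpha_i=0$: they play no role in $\alpha^\transp x$, so setting them to zero in the canonical witness $x[z]$ is without loss of generality for the validity question, which is why the minimal $z$-basic solution (as defined above) suffices and we do not need to enumerate their assignments separately.
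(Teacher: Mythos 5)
Your proposal is correct and follows essentially the same route as the paper: reduce validity to an enumeration of the minimal $z$-basic knapsack solutions via the exchange argument that sorting $a$ nondecreasingly within each block (and zeroing the zero-coefficient tail) makes $x[z]$ the cheapest point with profile $z$, then enumerate the at most $\prod_{k=1}^K(|I_k|+1)\leq n^{K}$ profiles. Your added details (the converse direction, prefix-sum precomputation) are fine refinements of the same argument.
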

		\textbf{Proof.}
		Assume that $\bar{x}\in\{0,1\}^n$ satisfies $a^\transp \bar{x}\leq b$ while $\alpha^\transp \bar{x}> \beta$. Define $\bar{z}\in\bZ_+^K$ such that $\bar{z}_k=\sum_{i\in I_k}\bar{x}_i$ for $k\in[K]$. Consider the $\bar{z}$-basic knapsack solution $x[\bar{z}]$. Since $a_{i_{k-1}+1}\leq a_{i_{k-1}+2}\leq\ldots\leq a_{i_k}$ for $k=1,\ldots,K$, we have $a^\transp x[\bar{z}]\leq a^\transp \bar{x}\leq b$ and $\alpha^\transp x[\bar{z}]=\alpha^\transp \bar{x}> \beta$. Therefore, if $\alpha^\transp x\leq \beta$ is not valid for $P$, then there exists a minimal $\bar{z}$-basic knapsack solution satisfying $a^\transp x[\bar{z}]\leq b$ and $\alpha^\transp x[\bar{z}]> \beta$. However, if $\alpha^\transp x\leq \beta$ is valid for $P$, then such solution does not exist. Therefore, verifying validity of the inequality $\alpha^\transp x\leq \beta$ amounts to checking through all minimal basic knapsack solutions, which takes time $O(n\prod_{k=1}^K(|I_k|+1))\leq n^{K+O(1)}$.
		\hfill \Halmos
		
		\begin{proposition}\label{prop:support}
			Assume inequality $\alpha^\transp x\leq \beta$ with $(\alpha,\beta)\in\bZ^{n+1}_+$ is valid for the knapsack polytope $P$. Then inequality $\alpha^\transp x\leq \beta$ defines a facet of $P$ if and only if the following two conditions hold:\begin{enumerate}
				\item There exists $\bar{x}\in Q$ such that $\alpha^\transp \bar{x}=\beta$ and $\bar{x}_n=1$.
				\item Inequality $\sum_{k=1}^K\gamma_k\sum_{i\in I_k}x_i\leq \beta$ is facet-defining for the knapsack polytope $P'=\conv(\{x\in\{0,1\}^{i_K}:\sum_{i=1}^{i_K}a_ix_i\leq b\})$.
			\end{enumerate}
			Moreover, the first condition can be checked in time $n^{K+O(1)}$.
		\end{proposition}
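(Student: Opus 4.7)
The plan is to prove both directions of the equivalence separately, and then to analyze the cost of checking condition~(1). Writing $F:=\{x\in P:\alpha^\transp x=\beta\}$ and $\pi:\R^n\to\R^{i_K}$ for the projection onto the first $i_K$ coordinates, a preliminary observation used in both directions is that $\pi(P)=P'$ (any vertex of $P'$ lifts to $S$ by appending zeros, and $\pi$ sends every vertex of $P$ back into the 0-1 solutions of $P'$), and consequently $\pi(F)=F':=\{y\in P':\alpha^\transp y=\beta\}$, since $\alpha$ vanishes on the dropped coordinates.

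For the forward direction, assume $\alpha^\transp x\leq\beta$ is facet-defining for $P$. Because $\|a\|_\infty\leq b$ we have $\bzero,\be_1,\ldots,\be_{n-1}\in S$, so $\{x_n=0\}$ induces a facet of $P$; and since $\alpha$ has positive entries in $I_1,\ldots,I_K\subseteq[i_K]$, the inequality $\alpha^\transp x\leq\beta$ is not a nonnegative scalar multiple of $-x_n\leq 0$, so the two facets are distinct and $F\not\subseteq\{x_n=0\}$. Hence some vertex of $F$ has $x_n=1$, giving~(1). For~(2), a rank-nullity argument applied to $\pi$ restricted to the affine hull of $F$ yields $\dim F'\geq\dim F-\dim\ker\pi=(n-1)-(n-i_K)=i_K-1$, and since $F'$ lies on a hyperplane of $\R^{i_K}$, we also have $\dim F'\leq i_K-1$, so $F'$ is a facet of $P'$.

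For the backward direction, condition~(2) supplies $i_K$ affinely independent vertices $\tilde y_1,\ldots,\tilde y_{i_K}$ of $F'$, which I lift to $\tilde v_\ell:=(\tilde y_\ell,\bzero)\in F$. Condition~(1) supplies an additional vertex $\bar x\in F$ with $\bar x_n=1$, affinely independent from the $\tilde v_\ell$ since those all have $x_n=0$. To reach $n$ affinely independent points in $F$, I partition $\{i_K+1,\ldots,n-1\}$ into $J_0:=\{j:\bar x_j=0\}$ and $J_1:=\{j:\bar x_j=1\}$, and define $w_j:=\bar x-\be_n+\be_j$ for $j\in J_0$ (feasible because $a_j\leq a_n$) and $w_j:=\bar x-\be_j$ for $j\in J_1$ (which only relaxes the knapsack). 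Each $w_j$ lies in $F$ because $\alpha$ is zero on $\{i_K+1,\ldots,n\}$. Affine independence of the full collection reduces, upon projecting the differences $\bar x-\tilde v_1$ and $w_j-\bar x$ onto the last $n-i_K$ coordinates, to a short linear-algebra check that the projected images span $\R^{n-i_K}$, while the differences $\tilde v_\ell-\tilde v_1$ (which vanish on those coordinates) contribute the remaining $i_K-1$ directions inside the first $i_K$ coordinates.

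For the complexity claim, I would mirror the basic-solution argument of Proposition~\ref{prop:validity}: enumerate all profiles $z\in\Z_+^K$ with $z_k\leq|I_k|$ and $\sum_k\gamma_k z_k=\beta$, of which there are at most $O(n^K)$, and for each compute the minimum knapsack value among 0-1 vectors with the prescribed per-group sums, $x_n=1$, and all other indices in $\{i_K+1,\ldots,n-1\}$ set to zero. By the sorting of $a$ within each $I_k$, this minimum equals $a_n+\sum_{k=1}^K\sum_{i=i_{k-1}+1}^{i_{k-1}+z_k}a_i$, and condition~(1) holds if and only if this value is at most $b$ for some admissible $z$; each profile is checked in $O(n)$ arithmetic operations, for a total of $n^{K+O(1)}$. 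The step I expect to be most delicate is the affine-independence verification in the backward direction, since the construction of the $w_j$ branches on whether $\bar x_j$ is $0$ or $1$ and the independence has to be read off coordinate-by-coordinate from the projected vectors; the forward direction is then a clean projection plus rank-nullity argument, and the complexity bound is a direct enumeration.
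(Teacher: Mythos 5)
Your proposal is correct and follows essentially the same route as the paper: necessity via uniqueness of the facet-inducing inequality (for condition 1) and projection of the tight face onto the first $i_K$ coordinates (for condition 2), sufficiency by lifting $i_K$ affinely independent tight points of $P'$ and generating the remaining $n-i_K$ tight points from $\bar{x}$ via swaps with coordinate $n$, and the complexity bound by enumerating minimal basic solutions augmented with $\be_n$. The only differences are cosmetic: you make the dimension count explicit via rank--nullity where the paper just cites the projection fact, and you branch on $\bar{x}_j\in\{0,1\}$ for $j>i_K$ instead of first normalizing $\bar{x}$ to have zeros there, as the paper does without loss of generality.
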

		\textbf{Proof.}
		Recall that $(a,b)\in\N^{n+1}$, $b\geq\max_i a_i$, and $P$ is full-dimensional.
		We first show that both conditions are necessary for inequality $\alpha^\transp x\leq \beta$ to be facet-defining for $P$. Assume inequality $\alpha^\transp x\leq \beta$ is facet-defining for $P$. Then the first condition must hold. Otherwise, by full dimensionality of $P$, $\alpha^\transp x\leq \beta$ can only be a multiple of $x_n\geq 0$, which contradicts the fact that $(\alpha,\beta)\in\bZ^{n+1}_+$. Inequality $\sum_{k=1}^K\gamma_k\sum_{i\in I_k}x_i\leq \beta$ is valid for $P'$ as $\alpha^\transp x\leq\beta$ is valid for $P$. Note that the face of $P'$ defined by $\sum_{k=1}^K\gamma_k\sum_{i\in I_k}x_i\leq \beta$ is the orthogonal projection of the face of $P$ defined by $a^\transp x\leq b$ to its first $n_K$ coordinates. It then follows that the second condition must hold as inequality $\alpha^\transp x\leq \beta$ is facet-defining for $P$.
		
		We next show that conditions 1 and 2 are sufficient for inequality $\alpha^\transp x\leq \beta$ to be facet-defining for $P$. Assume conditions 1 and 2 hold. Without loss of generality, we can assume $\bar{x}_{i_{K}+1}=\bar{x}_{i_{K}+2}=\ldots=\bar{x}_{n-1}=0$. (Otherwise, replacing those coordinates by $0$ yields such $\bar{x}$.) Since condition 2 holds, there exist affinely independent vectors $y^1,\ldots,y^{i_K}\in\{0,1\}^{i_K}$ on the facet of $P'$ defined by $\sum_{k=1}^K\gamma_k\sum_{i\in I_k}x_i\leq \beta$. Observe that the following $n$ vectors are affinely independent and lie on the face of $P$ defined by $\alpha^\transp x\leq \beta$:\begin{gather*}
			\bar{x}+\bfe_{i_{K}+1}-\bfe_n,\bar{x}+\bfe_{i_{K}+2}-\bfe_n,\ldots,\bar{x}+\bfe_{n-1}-\bfe_n,
			\bar{x},(y^1,\bzero_{n-i_K}),(y^2,\bzero_{n-i_K}),\ldots,(y^{i_K},\bzero_{n-i_K}).
		\end{gather*}
		Therefore, $\alpha^\transp x\leq \beta$ is facet-defining for $P$.
		
		Similar to the proof of Proposition \ref{prop:validity}, condition 1 holds if and only if there exists there exists vector $z$ and a minimal $z$-basic knapsack solution $x[z]$ such that $\bar{x}=x[z]+\bfe_n$ satisfies condition~1. Therefore, checking condition 1 amounts to checking through all minimal basic knapsack solutions, which takes time $n^{K+O(1)}$. \hfill \Halmos 
		
		We next extend the definition of minimal basic knapsack solutions to basic knapsack solutions.
		
		\begin{definition}
			For each $z\in\bZ_+^K$ satisfying $0\leq z_k\leq |I_k|$ for all $k\in [K]$ and $\sum_{k=1}^K\gamma_kz_k=\beta$, for all $k\in[K]$ with $0<z_k< |I_k|$, we call the following $|I_k|-1$ vectors the block-$k$ $z$-basic knapsack solutions:
			\begin{align*}
				&x^i[z,k]:=x[z]-\bfe_{i_{k-1}+i}+\bfe_{i_{k-1}+z_k+1},
				&&i=1,\ldots,z_k-1,\\
				&x^{i}[z,k]:=x[z]-\bfe_{i_{k-1}+z_k}+\bfe_{i_{k-1}+i},
				&&i=z_k+1,\ldots,|I_k|.
			\end{align*}
			We call the minimal $z$-basic knapsack solution $x[z]$ for all $k\in [K]$ and all block-$k$ $z$-basic knapsack solutions for all $k\in [K]$ with $0<z_k<|I_k|$, the $z$-basic knapsack solutions. We call a $z$-basic knapsack solution $x$ feasible if $a^\transp x\leq b$, and infeasible otherwise.
		\end{definition}
		
		We next show some basic properties of feasible $z$-basic knapsack solutions.
		\begin{lemma}\label{lem:basic_soln}
			Assume $z\in\bZ_+^K$ satisfies $0\leq z_k\leq |I_k|$ for all $k\in [K]$ and $\sum_{k=1}^K\gamma_kz_k=\beta$.
			Let $X[z]$ denote the set of all feasible $z$-basic knapsack solutions. Then the following hold:\begin{enumerate}
				\item All $z$-basic knapsack solutions are affinely independent of each other;
				\item If $z_k=0$ for some $k\in[K]$, then $X[z]$ is contained in the affine subspace defined by $x_{i_{k-1}+1}=\ldots=x_{i_{k}}=0$;
				\item If $z_k=|I_k|$ for some $k\in[K]$, then $X[z]$ is contained in the affine subspace defined by $x_{i_{k-1}+1}=\ldots=x_{i_{k}}=1$;
				\item If $0<z_k<|I_k|$ and $x^j[z,k]\notin X[z]$ (i.e., $x^j[z,k]$ is infeasible) for some $k\in[K]$ and $j\leq z_k-1$, then there does not exist $x\in Q$ such that $\sum_{i\in I_k}x_i=z_k$ for $k\in[K]$ and $x_{i_{k-1}+j}=0$, in which case set $X[z]$ is contained in the affine subspace defined by $x_{i_{k-1}+j}=1$;
				\item If $0<z_k<|I_k|$ and $x^j[z,k]\notin X[z]$ (i.e., $x^j[z,k]$ is infeasible) for some $k\in[K]$ and $j\geq z_k+1$, then there does not exist $x\in Q$ such that $\sum_{i\in I_k}x_i=z_k$ for $k\in[K]$ and $x_{i_{k-1}+j}=1$, in which case set $X[z]$ is contained in the affine subspace defined by $x_{i_{k-1}+j}=0$.
			\end{enumerate}
		\end{lemma}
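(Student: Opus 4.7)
The plan is to handle the three assertions in a unified way, exploiting the single structural fact that within each block $I_k$ the coefficients $a_i$ are sorted nondecreasingly, so among binary vectors with a prescribed number of ones inside $I_k$, the minimum of $\sum_{i \in I_k} a_i x_i$ is attained by the leftmost placement.

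For (1), I would verify affine independence by computing the differences $x^i[z,k] - x[z]$ and checking they are linearly independent. Each such difference has support confined to $\{i_{k-1}+1, \ldots, i_k\}$: it equals $\be_{i_{k-1}+z_k+1} - \be_{i_{k-1}+i}$ for $i \leq z_k - 1$, and $\be_{i_{k-1}+i} - \be_{i_{k-1}+z_k}$ for $i \geq z_k + 1$. Differences from distinct blocks live in disjoint coordinate subspaces and are therefore independent of each other. Within block $k$, the coefficient of $\be_{i_{k-1}+i}$ for any $i \notin \{z_k, z_k+1\}$ appears in exactly one of the $|I_k|-1$ differences, which immediately forces the corresponding coefficient in any vanishing linear combination to be zero; the remaining two positions $i_{k-1}+z_k$ and $i_{k-1}+z_k+1$ then trivially give zero coefficients. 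Assembling across blocks yields affine independence of the $i_K - K + 1$ points.

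For (2), suppose for contradiction that $x^j[z,k]$ is infeasible for some $k$ and $j \leq z_k - 1$, while some $x \in S$ satisfies $\sum_{i \in I_{k'}} x_i = z_{k'}$ for all $k' \in [K]$ and $x_{i_{k-1}+j} = 0$. I would define $y \in \{0,1\}^n$ by placing, inside each block $I_{k'}$ with $k' \neq k$, ones at the leftmost $z_{k'}$ positions, and inside block $k$, ones at positions $\{1,\ldots,z_k+1\} \setminus \{j\}$; set $y_i = 0$ for $i > i_K$. By direct inspection $y = x^j[z,k]$. Because $a$ is sorted within each block, this leftmost-subject-to-constraint placement minimizes $a^\transp(\cdot)$ over vectors with the same per-block cardinality and the same forced zero at position $i_{k-1}+j$, so $a^\transp y \leq a^\transp x \leq b$, contradicting infeasibility of $x^j[z,k]$. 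Statement (3) is entirely analogous: the forced equality $x_{i_{k-1}+j} = 1$ (with $j \geq z_k+1$) makes the leftmost-compatible minimizer $y$ coincide with $x^j[z,k] = x[z] - \be_{i_{k-1}+z_k} + \be_{i_{k-1}+j}$, and the same sorting comparison forces $a^\transp y \leq a^\transp x \leq b$. The containment of $X[z]$ in the claimed affine hyperplane is then the contrapositive.

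The main obstacle is bookkeeping rather than a conceptual leap: I need to confirm that the leftmost-compression of any admissible $x$ recovers exactly $x^j[z,k]$ in both parts (2) and (3), and that the two pivot positions $i_{k-1}+z_k$ and $i_{k-1}+z_k+1$ in part (1) are correctly handled so that no spurious combination of difference vectors collapses to zero. Once the indexing is set up cleanly, each of the three claims follows from a one-line comparison.
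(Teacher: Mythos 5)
Your proposal is correct and follows essentially the same route as the paper: part (1) is the direct linear-independence check of the difference vectors $x^i[z,k]-x[z]$ (which the paper dismisses as following from the definition), and parts (2)--(3) use exactly the paper's key observation that, thanks to the within-block sorting of $a$, the point $x^j[z,k]$ is the minimizer of $a^\transp x$ over binary vectors with block sums $z$ and the forced coordinate, so its infeasibility rules out any such $x\in S$. No substantive difference or gap.
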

		\textbf{Proof.}
		The first conclusion follows from the definition of $z$-basic knapsack solutions. The second and third conclusions are trivial. Fix $k\in [K]$ and $j\leq z_k-1$. Note that $(a_i)_{i\in I_{k'}}$ is assumed to be sorted such that $a_{i_{k'-1}+1}\leq a_{i_{k'-1}+2}\leq\ldots\leq a_{i_{k'}}$ for $k'\in[K]$. Therefore, for $x\in\{0,1\}^n$ such that $\sum_{i\in I_{k'}}x_i=z_{k'}$ for all $k'\in[K]$, and $x_{i_{k-1}+j}=0$, we have\begin{multline*}
			a^\transp x=\sum_{k'\in[K]}\sum_{i\in I_{k'}}a_ix_i
			\geq \sum_{k'\in[K]:k'\neq k}\sum_{j'=1}^{z_{k'}}a_{i_{k'-1}+j'}+
			\sum_{j'\in [z_k]:j'\neq j}a_{i_{k-1}+j'}+a_{i_{k-1}+z_k+1}=a^\transp x^j[z,k],
		\end{multline*}
		i.e., \begin{multline*}
			x^j[z,k]\in\arg\min\{a^\transp x:x\in\{0,1\}^n;
			\sum_{i\in I_k}x_i=z_k,~k\in[K];~x_{i_{k-1}+j}=0\}.
		\end{multline*}
		In this case, $x^j[z,k]$ being infeasible (i.e., $a^\transp x^j[z,k]>b$) implies that there is no feasible $x\in\{0,1\}^n$ such that $\sum_{i\in I_{k'}}x_i=z_{k'}$ for all $k'\in[K]$, and $x_{i_{k-1}+j}=0$. The fourth conclusion then follows. The fifth conclusion follows similarly. \hfill \Halmos 
		
		We are now prepared to show the main result of this section.
		\begin{theorem}\label{thm: xp}
			Determining whether inequality $\alpha^\transp x\leq \beta$ with $(\alpha,\beta)\in\bZ_+^{n+1}$ is facet-defining for the knapsack polytope $P=\conv(\{x\in\{0,1\}^n:a^\transp x\leq b\})$ can be done in time $n^{K+O(1)}$.
		\end{theorem}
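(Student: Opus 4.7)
The plan is to leverage Proposition~\ref{prop:support}: first verify validity of $\alpha^\transp x \leq \beta$ via Proposition~\ref{prop:validity}, then check Conditions~1 and 2 of Proposition~\ref{prop:support}. The first two tasks already run in $n^{K+O(1)}$ time, so the only remaining job is to check Condition~2---whether $\alpha^\transp x \leq \beta$ is facet-defining for the restricted polytope $P' = \conv(\{x \in \{0,1\}^{i_K} : \sum_{i=1}^{i_K} a_i x_i \leq b\})$---within the same time bound.

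My algorithm for Condition~2 enumerates all $z \in \bZ_+^K$ with $0 \leq z_k \leq |I_k|$ for each $k$ and $\sum_{k=1}^K \gamma_k z_k = \beta$. There are at most $\prod_{k=1}^K(|I_k|+1) \leq n^{K+O(1)}$ such vectors, and they exhaust the possible block profiles $(\sum_{i \in I_k} x_i)_{k=1}^K$ of points on the face $\{x \in P' : \alpha^\transp x = \beta\}$. For each such $z$, I compute in polynomial time the set $X[z]$ of feasible $z$-basic knapsack solutions (at most $i_K - K + 1$ per $z$), then run Gaussian elimination on $\bigcup_z X[z]$---a set of at most $n^{K+O(1)}$ points in dimension $i_K \leq n$---to compute its affine rank. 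Declare the inequality facet-defining for $P'$ iff this rank equals $i_K$. The total running time is $n^{K+O(1)}$.

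Correctness reduces to the claim $\aff\{x \in P' : \alpha^\transp x = \beta\} = \aff(\bigcup_z X[z])$. Since every $x$ on the face lies in exactly one slice $F_z := \{x \in \{0,1\}^{i_K} : a^\transp x \leq b,\ \sum_{i \in I_k} x_i = z_k\ \forall k\}$, it suffices to show $\aff(X[z]) = \aff(F_z)$ whenever $X[z] \neq \emptyset$. Lemma~\ref{lem:basic_soln}(1) gives affine independence of $X[z]$, hence $\dim \aff(X[z]) = |X[z]| - 1$. Lemma~\ref{lem:basic_soln}(2)--(3) shows each infeasible $z$-basic solution fixes a distinct coordinate of every point in $F_z$ to a fixed $0/1$ value---distinct because within a block the indices $j \in [|I_k|] \setminus \{z_k\}$ are pairwise distinct, and different blocks involve disjoint coordinate ranges. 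Thus $F_z$ satisfies the $K$ block-sum equations together with $m$ coordinate-fixing equations on pairwise distinct variables, all mutually independent, yielding $\dim \aff(F_z) \leq i_K - K - m = |X[z]| - 1$; combined with $X[z] \subseteq F_z$ this forces equality.

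I expect the main obstacle to be this affine-hull identification step---specifically, verifying that no ``hidden'' affine relations on $F_z$ escape detection by the basic solutions, and that the coordinate-fixing constraints extracted from different infeasible basic solutions are indeed linearly independent of one another and of the block-sum equations. Once this is established, the theorem reduces to a routine enumerate-and-rank procedure whose runtime is dominated by the $n^K$ enumeration of valid block profiles.
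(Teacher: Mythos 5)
Your proposal is correct and follows essentially the same route as the paper: enumerate all profiles $z$ with $\sum_k\gamma_kz_k=\beta$, collect the feasible basic knapsack solutions, and decide facet-definingness by an affine-rank computation, with correctness resting on Lemma~\ref{lem:basic_soln} exactly as in the paper. The only cosmetic difference is that you establish $\aff(X[z])=\aff(F_z)$ slice by slice via a direct dimension count, whereas the paper phrases the same use of Lemma~\ref{lem:basic_soln}(2)--(3) as a proof by contradiction with a point $\bar{x}$ affinely independent of all feasible basic solutions.
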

		\textbf{Proof.}
		By Propositions \ref{prop:validity} and \ref{prop:support}, we can assume without loss of generality that $i_K=n$ and $\alpha^\transp x\leq \beta$ is valid for $P$. Also by Theorem \ref{theo: balas_canonical}, we only need to consider the case when $K\geq 2$. We will show that determining whether inequality $\alpha^\transp x\leq \beta$ is facet-defining for $P$ amounts to checking whether there exist $n$ affinely independent vectors among all feasible basic knapsack solutions. Note that there are no more than $O((n-K+1)\prod_{k=1}^K(|I_k|+1))\leq n^{K+O(1)}$ feasible basic knapsack solutions. Checking whether there exist $n$ affinely independent vectors among all feasible basic knapsack solutions can be done in time $n^{K+O(1)}$ by computing the rank of a matrix with $(n+1)$ rows and up to $n^{K+O(1)}$ columns.
		
		First, if there exist $n$ affinely independent vectors among all feasible basic knapsack solutions, then $\alpha^\transp x\leq \beta$ is facet-defining for $P$ by the definition of feasible basic knapsack solutions. Now, assume for contradiction that there does not exist $n$ affinely independent vectors among all feasible basic knapsack solutions, and $\alpha^\transp x\leq \beta$ is facet-defining for $P$. Then there exists $\bar{x}\in Q$ lying on the facet of $P$ defined by $\alpha^\transp x\leq \beta$ such that $\bar{x}$ is affinely independent of all feasible basic knapsack solutions. Define $\bar{z}\in\bZ_{+}^K$ such that $\bar{z}_k=\sum_{i\in I_k}\bar{x}_i$ for $k=1,\ldots,K$. Consider the set $X[\bar{z}]$ of all feasible $\bar{z}$-basic knapsack solutions. Note that $X[\bar{z}]=\emptyset$ if and only if $a^\transp x[\bar{z}] > b$, which implies $a^\transp\bar{x}\geq a^\transp x[\bar{z}] > b$, i.e., $\bar{x}\notin Q$. Therefore, $X[\bar{z}]\neq \emptyset$.  Since $\bar{x}$ is affinely independent of all feasible basic knapsack solutions, $\bar{x}$ is affinely independent of all points in $X[\bar{z}]$. Note that, by Lemma \ref{lem:basic_soln}, the affine hull of $X[\bar{z}]$ is defined exactly by the following $n+1-|X[\bar{z}]|$ equations:\begin{enumerate}
			\item[(i)] $\sum_{i\in I_k}x_i=\bar{z}_k$, for $k\in[K]$ with $0<\bar{z}_k<|I_k|$;
			\item[(ii)] $x_{i_{k-1}+1}=\ldots=x_{i_{k}}=0$, for $k\in[K]$ with $\bar{z}_k=0$;
			\item[(iii)] $x_{i_{k-1}+1}=\ldots=x_{i_{k}}=1$, for $k\in[K]$ with $\bar{z}_k=|I_k|$;
			\item[(iv)] $x_{i_{k-1}+j}=1$, for $k\in[K]$ with $0<\bar{z}_k<|I_k|$ and $j\leq \bar{z}_k-1$ with $x^j(\bar{z},k)\notin X[\bar{z}]$;
			\item[(v)] $x_{i_{k-1}+j}=0$, for $k\in[K]$ with $0<\bar{z}_k<|I_k|$ and $j\geq \bar{z}_k+1$ with $x^j(\bar{z},k)\notin X[\bar{z}]$.        
		\end{enumerate}
		Note that $\bar{x}$ already satisfies the equations defined by (i)-(iii). Since $\bar{x}$ is affinely independent of all points in $X[\bar{z}]$, one the following two must be true:\begin{enumerate}
			\item There exist $k\in[K]$ with $0<\bar{z}_k<|I_k|$ and $j\leq \bar{z}_k-1$ such that $x^j(\bar{z},k)\notin X[\bar{z}]$ while $\bar{x}_{i_{k-1}+j}=0$;
			\item Or there exist $k\in[K]$ with $0<\bar{z}_k<|I_k|$ and $j\geq \bar{z}_k+1$ such that $x^j(\bar{z},k)\notin X[\bar{z}]$ while $\bar{x}_{i_{k-1}+j}=1$.
		\end{enumerate}
		It then contradicts with the last two conclusions of Lemma \ref{lem:basic_soln} as $\bar{x}\in Q$. \hfill \Halmos 
		
		Extending the proof, one can show that the ``strength" of a valid inequality $\alpha^\transp x\leq \beta$ for defining the KP can be computed  in time $n^{K+O(1)}$.
		\begin{corollary}
			Given a valid inequality $\alpha^\transp x\leq \beta$ with $(\alpha,\beta)\in\bZ_+^{n+1}$, the dimension of the face of $P$ defined by $\alpha^\transp x\leq \beta$ can be computed in time $n^{K+O(1)}$.
		\end{corollary}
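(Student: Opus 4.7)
The plan is to reduce the computation of the dimension of $F := \{x \in P : \alpha^\transp x = \beta\}$ to a rank computation over the set of all feasible basic knapsack solutions. Let $\mathcal{B} := \bigcup_{z} X[z]$, where $z$ ranges over all vectors in $\bZ_+^K$ with $0 \leq z_k \leq |I_k|$ for each $k \in [K]$ and $\sum_{k=1}^K \gamma_k z_k = \beta$. The structural fact I intend to reuse from the proof of Theorem~\ref{thm: xp} is that for any $\bar{x} \in S$ with $\alpha^\transp \bar{x} = \beta$, setting $\bar{z}_k := \sum_{i \in I_k} \bar{x}_i$, the point $\bar{x}$ must lie in the affine hull of $X[\bar{z}]$; otherwise one of the last two conclusions of Lemma~\ref{lem:basic_soln} would be contradicted.

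First I would argue that $\dim(F) = \dim(\aff \mathcal{B})$ (with the convention $\dim(\emptyset) = -1$). Since $P$ is a $0/1$ polytope, every vertex of the face $F$ is a vertex of $P$ and hence integer, so $F = \conv(S \cap F)$. By the structural fact above, $S \cap F \subseteq \aff(\mathcal{B})$, and therefore $F \subseteq \aff(\mathcal{B})$. Conversely, every element of $\mathcal{B}$ is by definition an integer point in $S$ with $\sum_{i \in I_k} x_i = z_k$ for each $k$, hence $\alpha^\transp x = \sum_{k=1}^K \gamma_k z_k = \beta$, so $\mathcal{B} \subseteq F$. The two affine hulls therefore agree.

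Next I would enumerate $\mathcal{B}$. There are at most $\prod_{k=1}^K (|I_k|+1) \leq n^K$ admissible vectors $z$, and for each such $z$ at most $i_K - K + 1 \leq n$ $z$-basic knapsack solutions, whose feasibility is tested in linear time via $a^\transp x \leq b$. Hence $|\mathcal{B}| \leq n^{K+O(1)}$ and the enumeration finishes within the same bound. Finally I would compute $\dim(\aff \mathcal{B})$ as the rank of the matrix whose rows are $x - x^0$ for $x \in \mathcal{B}$ and some fixed $x^0 \in \mathcal{B}$ (returning $-1$ if $\mathcal{B} = \emptyset$), which takes polynomial time in $|\mathcal{B}|$ and $n$, giving the overall bound $n^{K+O(1)}$. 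The main obstacle is the containment $F \subseteq \aff(\mathcal{B})$, but this is essentially already carried out inside the proof of Theorem~\ref{thm: xp}, where any $\bar{x} \in S \cap F$ affinely independent of $X[\bar{z}]$ is shown to contradict Lemma~\ref{lem:basic_soln}; the rest of the argument is routine enumeration and linear algebra.
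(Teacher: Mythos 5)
Your proposal is essentially the paper's own proof, just spelled out in more detail: the paper likewise invokes the internal argument of Theorem~\ref{thm: xp} (via Lemma~\ref{lem:basic_soln}) to conclude that every integer point of the face is an affine combination of feasible basic knapsack solutions, and then reads off the dimension from the rank of a matrix built from those at most $n^{K+O(1)}$ points. The one caveat, which you share with the paper, is that this structural fact is only established after the reduction to $i_K=n$: if $\alpha$ has zero coefficients, every basic knapsack solution has $x_i=0$ for $i>i_K$, so the inclusion $F\subseteq\aff(\mathcal{B})$ (and hence your equality $\dim F=\dim\aff\mathcal{B}$) requires either that reduction or enlarging the family, e.g.\ by also including the feasible points $x[z]+\be_i$ for $i>i_K$.
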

		\textbf{Proof.}
		The proof of Theorem \ref{thm: xp} implies that every $\bar{x}\in Q$ can be written as an affine combination of several feasible basic knapsack solutions. Therefore, computing $\dim(P)$ amounts to computing the maximum number of affinely independent vectors among all feasible basic knapsack solutions, which can be done in time $n^{K+O(1)}$ by computing the rank of a matrix with $(n+1)$ rows and up to $n^{K+O(1)}$ columns. \hfill \Halmos 
		
		\full{
			The proofs in this section can also be easily extended to show that
			determining whether an inequality $\alpha^\transp x\leq \beta$ is facet-defining for totally-ordered multidimensional knapsack polytope \cite{del2023multi} is {\XP} with respect to $|\alpha|_+$.}
		
		\full{
			\section{KP Membership Problem}
			\label{sec: kp_membership}
			In this section, we study the \emph{membership problem} associated with KP.
			Generally speaking, the \emph{membership problem} asks the following question: Given an element $x$ and a set, is $x$ contained in this set?
			As trivial as this problem might seem to be, in some cases this decision problem can be rather hard to answer. As a selective list of examples in literature,
			\cite{MR916001} show that the membership problem for the copositive cone, that is deciding whether or not a given matrix is in the copositive cone, is a \coNP-complete problem. \cite{MR3165055} also show that the membership problem for the completely positive cone is \NP-hard. Moreover, it is worth mentioning that, for any separation problem: ``Given point $x^*$, find an inequality from a given cutting-plane family that is violated by $x^*$, or show none exists", the decision part of it can be essentially viewed as a membership problem: ``Given a point $x^*$ and the cutting-plane closure defined by intersecting all inequalities from the family, is $x^*$ contained in this closure?"
			In combinatorial optimization, the membership problem often takes the form of:
			``Given a point and a combinatorial polytope, is the point contained in this polytope?" Here the polytope is defined as the convex hull of integer (or binary) points satisfying certain properties, since a linear description of the polytope will trivialize the membership problem. 
			For any polytope arising from combinatorial optimization that is defined as the convex hull of certain set of binary points, e.g., the TSP polytope, the clique polytope, the matching polytope etc., the corresponding membership problem is obviously in \NP, since if point $p$ in this polytope, then by Carath\'edory's theorem there exist at most $d+1$ binary points such that $p$ can be written as the convex combination of there. Here $d$ is the dimension of the polytope. \cite{papadimitriou1982complexity} have shown that the membership problem of the TSP polytope is in fact \NP-complete. The next theorem gives an analogous result for KP. 
			Here recall the well-known \emph{partition problem}:  Given $(w_1, \ldots, w_n) \in \Z^n_+$, does there exist a subset $S \subseteq [n]$, such that $w(S) = w([n] \setminus S)$? 
			
			\begin{theorem}
				The membership problem of KP is \NP-complete.
			\end{theorem}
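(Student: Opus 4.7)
The plan is to establish membership in \NP\ via Carath\'eodory's theorem (as sketched in the introduction to the section) and then reduce from the \emph{partition problem}, which is a classical \NP-complete problem.

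Membership in \NP\ is straightforward: if $p$ lies in the knapsack polytope $\conv(\{x\in\{0,1\}^n:a^\transp x\leq b\})$, which has dimension at most $n$, then by Carath\'eodory's theorem $p$ can be written as a convex combination of at most $n+1$ feasible binary points. A nondeterministic verifier guesses these points and the convex combination coefficients; verifying that each point satisfies the knapsack constraint, that the coefficients are nonnegative and sum to $1$, and that the combination equals $p$ can all be done in polynomial time (the coefficients themselves have polynomial encoding size since they solve a linear system of polynomial size).

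For \NP-hardness I will reduce from \emph{partition}. Given $(w_1,\ldots,w_n)\in\Z^n_+$, let $W:=\sum_{i=1}^n w_i$. If $W$ is odd, the partition instance is a trivial no-instance, so output any fixed no-instance of the membership problem. Otherwise, build the membership instance with knapsack constraint $w^\transp x\leq W/2$ and query point $p^*:=\tfrac{1}{2}\mathbf{1}\in\R^n$. The reduction clearly runs in polynomial time.

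It remains to prove correctness of the reduction, namely that a partition exists if and only if $p^*\in\conv(\{x\in\{0,1\}^n:w^\transp x\leq W/2\})$. The forward direction is easy: if $S\subseteq[n]$ satisfies $w(S)=W/2$, then both the indicator $\mathbf{1}_S$ and its complement $\mathbf{1}_{[n]\setminus S}$ are feasible binary points, and $p^*=\tfrac{1}{2}\mathbf{1}_S+\tfrac{1}{2}\mathbf{1}_{[n]\setminus S}$. For the converse, which is the key step, suppose $p^*=\sum_k \lambda_k v^{(k)}$ with $\lambda_k>0$, $\sum_k\lambda_k=1$, and each $v^{(k)}\in\{0,1\}^n$ satisfying $w^\transp v^{(k)}\leq W/2$. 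Since $w^\transp p^*=W/2$, taking the linear combination forces $\sum_k\lambda_k w^\transp v^{(k)}=W/2$, and combined with each $w^\transp v^{(k)}\leq W/2$ this forces $w^\transp v^{(k)}=W/2$ for every $k$ in the support. Thus any such $v^{(k)}$ gives a yes-certificate for partition. I do not anticipate a major obstacle here; the only mild subtlety is the parity preprocessing for odd $W$, and ensuring that the polynomial-size certificate for the \NP\ upper bound has rational coefficients of polynomial bit length, which follows from standard linear-algebra arguments over the feasible binary points produced by Carath\'eodory.
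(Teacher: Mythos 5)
Your proposal is correct and matches the paper's argument essentially verbatim: \NP-membership via Carath\'eodory (which the paper handles in the section's preamble), and \NP-hardness by reducing partition to membership of the point $(1/2,\ldots,1/2)$ in the knapsack polytope with capacity $w([n])/2$, with the same forward and converse arguments. Your parity preprocessing for odd $W$ is harmless but unnecessary, since in that case no feasible binary point attains $w([n])/2$ and the equivalence holds anyway.
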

			\textbf{Proof.}
			Let $(a_1, \ldots, a_n)$ be an input to an instance of the \emph{partition problem}, let $x^*:=(1/2, \ldots, 1/2)$ and the knapsack constraint given by $a^\transp x \leq a([n])/2$. Since the \emph{partition problem} is \NP-complete, we only need to show that there exists $S \subseteq [n]$ with $a(S) = a([n] \setminus S)$ if and only if $x^* \in \conv\left( \left\{x \in \{0,1\}^n: a^\transp x \leq a([n])/2 \right\} \right)$. 
			
			If there exists $S \subseteq [n]$ with $a(S) = a([n] \setminus S)$, then $a(S) = a([n] \setminus S) = a([n])/2$, so $\chi^{S}, \chi^{[n] \setminus S} \in \left\{x \in \{0,1\}^n: a^\transp x \leq a([n])/2 \right\}$. Hence,
			$$x^* = 1/2\chi^{S} + 1/2 \chi^{[n] \setminus S} \in \conv\left( \left\{x \in \{0,1\}^n: a^\transp x \leq a([n])/2 \right\} \right).$$ 
			On the other hand, if $x^* \in \conv\left( \left\{x \in \{0,1\}^n: a^\transp x \leq a([n])/2 \right\} \right)$, then we know $x^*$ can be written as the convex combination of several points in $\{0,1\}^n$ which all satisfy $a^\transp x = a([n])/2$ since $a^\transp x^* = a([n])/2$. The support of any one of these binary points will serve as a yes-certificate to the \emph{partition problem}. \hfill \Halmos \\
		}
		
		\section{Concluding Remarks}
		We have shown in this paper that deciding whether a given inequality is facet-defining for a knapsack polytope is \DP-complete, but can be done in time $n^{K}\cdot\texttt{poly}(n)$ where $K$ denotes the number of distinct coefficients in the equality. It is still open whether it can be done in time $f(K)\cdot\texttt{poly}(n)$ where $f$ is some computable function independent of $n$, i.e., whether the knapsack facet recognition problem is fixed-parameter tractable.
		
		\bibliographystyle{plain}

		\bibliography{cite}

	\end{document}